\documentclass[11pt,twocolumn]{article}
\usepackage[centertags]{amsmath}
\usepackage{amsfonts}
\usepackage{amssymb}
\usepackage{amsthm}
\usepackage{amsgen}
\usepackage{amscd}
\usepackage{graphicx}
\usepackage{algorithmic}
\usepackage{algorithm}
\usepackage{multirow}
\usepackage{extramarks}
\usepackage{fancyhdr}
\usepackage{subfigure}
\usepackage[bookmarks=false]{hyperref}

\input{xy}
\xyoption{all}
\theoremstyle{plain}
\newtheorem{thm}{Theorem}[section]

\newtheorem{lem}[thm]{Lemma}

\newtheorem{ex}[thm]{Example}
\newtheorem{defn}[thm]{Definition}
\newtheorem{nota}[thm]{Notation}
\theoremstyle{remark}
\newtheorem{rem}{Remark}[section]
\numberwithin{equation}{section}
\numberwithin{algorithm}{section}


\begin{document}


\title{\textbf{The Number of Multistate Nested Canalyzing Functions
}\footnote{
This research was supported by the National Science Foundation under Grant Nr. CMMI-0908201.}}

\author{David Murrugarra and Reinhard Laubenbacher\\ Virginia Bioinformatics Institute and Mathematics Department\\ Virginia Polytechnic Institute and State University}
\maketitle

\abstract
Identifying features of molecular regulatory networks is an important problem in systems biology. 
It has been shown that the combinatorial logic of such networks can be captured in many cases
by special functions called nested canalyzing in the context of discrete dynamic network models. 
It was also shown that the dynamics of networks constructed from such functions has very
special properties that are consistent with what is known about molecular networks, and that 
simplify analysis. It is important to know how restrictive this class of functions is, for instance for 
the purpose of network reverse-engineering. This paper contains a formula for the number of such 
functions and a comparison to the class of all functions. In particular, it is shown that, as the number of 
variables becomes large, the ratio of the number of nested canalyzing functions to the number of
all functions converges to zero. This shows that the class of nested canalyzing functions is indeed
very restrictive, indicating that molecular networks have very special properties. The principal
tool used for this investigation is a description of these functions as polynomials and a parameterization
of the class of all such polynomials in terms of relations on their coefficients.


\section{Introduction}

A central problem of molecular systems biology is to understand the structure and dynamics of
molecular networks, such as gene regulatory, signaling, or metabolic networks. Some progress has been
made in elucidating general design principles of such networks. For instance, in  \cite{Milo} it was shown
that certain graph theoretic motifs appear far more often in the topology of regulatory network graphs than
would be expected at random. In \cite{Kauffman2003, Kauffman2004, Murrugarra} it was shown that a certain type of
Boolean regulatory logic  has the kind of dynamic properties one would expect from molecular networks.
And in \cite{Harris} it was shown that logical rules that appear in published Boolean models of
regulatory networks are overwhelmingly of this type.

These rules, so-called \emph{nested canalyzing} rules, are a special case of \emph{canalyzing}
rules, which are reminiscent of
Waddington's concept of canalyzation in gene regulation \cite{Waddington}. Nested canalyzing Boolean
rules were shown in \cite{Jarrah} to be identical with the class of unate cascade functions, which
have been studied extensively in computer engineering. They represent exactly the class of
Boolean functions that result in binary decision diagrams of shortest average path length \cite{Butler}. This in itself
has interesting implications for information processing in molecular networks. One consequence
of this result is that a formula derived earlier for the number of unate cascade functions of a given
number of variables \cite{SasaoKinoshita} applies to give a formula for the
number of nested canalyzing Boolean functions,
described in \cite{Jarrah}. A formula for the number of canalyzing Boolean functions had been given
in \cite{Win}.

Many molecular networks cannot be described using the Boolean framework, since more than one
threshold for a molecular species might be required to represent different modes of action. There are
several frameworks available for multistate discrete models, such as so-called logical models, Petri nets,
and agent-based models. It has been shown in \cite{Veliz-Cuba} and \cite{Hinkelmann} that all these
model types can be translated into the general and mathematically well-founded framework of
polynomial dynamical systems over a finite number system. In \cite{Murrugarra} the concept of nested
canalyzing logical rule has been generalized to such polynomial systems. It has been shown there,
furthermore, that a large proportion of rules in multistate discrete models are indeed nested canalyzing,
showing that this concept captures an important feature of the regulatory logic of molecular networks.

As was pointed out in \cite{Win} and \cite{Jarrah}, knowing the number of nested canalyzing rules
for a given number of input variables and for a given number of possible variable states is important
because on the one hand it provides an estimate of how plausible it is that such rules have evolved as
regulatory principles and, on the other hand, provides an estimate of how restrictive the set of rules
is.  The latter is important, for instance, for the reverse-engineering of networks.
If the set of rules is sufficiently restrictive, then the reverse-engineering problem, which is almost
always underdetermined due to limited data, becomes more tractable when restricted to 
reverse-engineering networks consisting of nested canalyzing functions. 
In this paper we present a formula for the number of nested canalyzing functions in a given number
of variables and show that the ratio of nested canalyzing functions and all multistate functions
converges to zero as the number of variables increases. We follow the approach in \cite{Jarrah} and
solve the problem within the framework of polynomial dynamical systems, which makes it
possible to frame it as a problem of counting solutions to a system of polynomial equations.


\section{Nested Canalyzing Functions}
As mentioned in the previous section, it is possible to view most discrete models within the framework
of dynamical systems over a finite number system, or finite field. For our purposes we will use the finite
fields $\mathbb{F}_p =\{0,1,\dots,p-1\}$, $p$ an arbitrary prime number,
otherwise known as $\mathbb{Z}/p$, the integers modulo $p$.
 Furthermore, we will assume that $\mathbb{F}_p$ is totally ordered under the canonical order, that is, its elements are arranged in linear increasing order, $\mathbb{F}_p = \{0 < 1<\dots<p-1\}$.
Let $\mathbb{F} = \mathbb{F}_p$ for some prime $p$.
We first recall the general definition of a nested canalyzing function in variables $x_1,\ldots , x_n$
from \cite{Murrugarra}.
The underlying idea is as follows: A rule is nested canalyzing, if there exists a variable $x$ such that, if $x$
receives certain inputs, then it by itself determines the value of the function. If $x$ does not receive
these certain inputs, then there exists another variable $y$ such that, if $y$ receives certain other inputs,
then it by itself determines the value of the function; and so on, until all variables are exhausted.

\begin{defn}\label{DefnNCF}
Let $S_i\subset \mathbb{F}, i = 1, \ldots , n,$ be subsets that satisfy the property that
each $S_i$ is a proper, nonempty subinterval of $\mathbb{F}$; that is, every element of
$\mathbb{F}$ that lies between two elements of $S_i$ in the chosen order
is also in $S_i$. Furthermore, we assume that the complement of each $S_i$ is also a
subinterval, that is, each $S_i$ can be described by a threshold $s_i$, with all elements of $S_i$
either larger or smaller than $s_i$. Let $\sigma$ be a permutation on $\{1,\dots,n\}$.
 \begin{itemize}
\item The function $f:\mathbb{F}^n\rightarrow \mathbb{F}$ is a nested canalyzing function in the variable order $x_{\sigma(1)},\dots,x_{\sigma(n)}$ with canalyzing input sets $S_1,\dots,S_n\subset \mathbb{F}$ and canalyzing output values $b_1,\dots,b_n,b_{n+1}\in\mathbb{F}$ with $b_n\neq b_{n+1}$ if it can be represented in the form
\begin{displaymath}
\begin{array}{l}
f(x_1,\dots,x_n)=\\ \\
\left\{
\begin{array}{l}
b_1\text{ if}\ x_{\sigma(1)}\in S_1\\
b_2\text{ if}\ x_{\sigma(1)}\notin S_1,x_{\sigma(2)}\in S_2\\
\vdots\\
b_n\text{ if}\ x_{\sigma(1)}\notin S_1,\dots,x_{\sigma(n)}\in S_n\\
b_{n+1}\text{ if}\ x_{\sigma(1)}\notin S_1,\dots,x_{\sigma(n)}\notin S_n
\end{array}\right.
\end{array}
\end{displaymath}
\item The function $f:\mathbb{F}^n\rightarrow \mathbb{F}$ is a nested canalyzing function if it is a nested canalyzing function in some variable order $x_{\sigma(1)},\dots,x_{\sigma(n)}$ for some permutation $\sigma$ on $\{1,\dots,n\}$.
\end{itemize}
\end{defn}

It is straightforward to verify that, if $p=2$, that is $\mathbb{F} = \{0,1\}$, then we recover the definition in \cite{Kauffman2003} of a Boolean nested canalyzing rule. As mentioned above, several important classes of multistate discrete models can be represented in the form of a dynamical system $f:\mathbb{F}^n\longrightarrow \mathbb{F}^n$, so that the concept of a nested canalyzing rule defined in this way has broad applicability.


\section{Polynomial form of nested canalyzing functions}

We now use the fact that any function $f:\mathbb{F}^n\longrightarrow \mathbb{F}$ can be expressed
as a polynomial in $n$ variables \cite[p. 369]{Lidl}. In this section we determine the polynomial form of
nested canalyzing functions. That is, we will determine relationships among the coefficients of a polynomial
that make it nested canalyzing. We follow the approach in \cite{Jarrah}.
Let $B_n$ be the set of functions from $\mathbb{F}^n$ to $\mathbb{F}$, i.e.,
$B_n=\{f:\mathbb{F}^n\longrightarrow \mathbb{F}\}$.
The set $B_n$ is endowed with an addition and multiplication that is induced from that of $\mathbb{F}$,
which makes it into a ring.
Let $I$ be the ideal of the ring of polynomials $\mathbb{F}[x_1,\dots,x_n]$ generated by the
polynomials $\{x^p_i-x_i\}$ for all $i=1,\dots,n$, where $p$ is the number of elements in $\mathbb{F}$.
There is an isomorphism between $B_n$ and the quotient ring
$\mathbb{F}[x_1,\dots,x_n]/I$ which is also isomorphic to
\begin{displaymath}
R=\biggl\{\sum_{\substack{(i_1,\dots,i_n)\\i_t\in \mathbb{F}\\t=1,\dots,n}}C_{i_1\dots i_n}x^{i_1}_1x^{i_2}_2\cdots x^{i_n}_n\biggr\}
\end{displaymath}

Now we use this identification to study nested canalyzing functions as elements of $R$.

Given a subset $S$ of $\mathbb{F}$, we will denote by $Q_{S}$ the indicator function of the complement of $S$, i.e., for $x_0\in \mathbb{F}$, let
\begin{displaymath}
Q_{S}(x_0)=\biggl\{\begin{array}{ll}
0&\text{if}\ x_0\in S\\
1&\text{if}\ x_0\notin S
\end{array}
\end{displaymath}

We will derive the polynomial form for $Q_{S}(x)$ in Lemma~\ref{Qr}. The following theorem gives the polynomial form of a nested canalyzing function.

\begin{thm}\label{ThmNCF}
Let $f$ be a function in $R$. Then
the function $f$ is nested canalyzing in the variable order $x_1,\dots,x_n$ with canalyzing input sets $S_1,\dots,S_n$ and canalyzing output values $b_1,\dots,b_n,b_{n+1}$ with $b_n\neq b_{n+1}$, if and only if it has the polynomial form

\begin{equation}\label{NCFformula}
f(x_1,\ldots,x_n)=
{\displaystyle\sum^{n-1}_{j=0}\biggl\{(b_{n-j+1}-b_{n-j}){\displaystyle\prod^{n-j}_{i=1}Q_{S_i}(x_i)}\biggr\}}+b_1
\end{equation}

where $Q_{S_i}$ is defined as in Lemma~\ref{Qr}.

\end{thm}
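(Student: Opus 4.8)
The plan is to exploit the uniqueness of polynomial representation furnished by the isomorphism $B_n\cong R$ recorded above: two elements of $R$ coincide if and only if they agree, as functions, at every point of $\mathbb{F}^n$. Consequently it suffices to show that the right-hand side of (\ref{NCFformula}), evaluated on $\mathbb{F}^n$, returns exactly the values prescribed by Definition~\ref{DefnNCF} for the nested canalyzing function with input sets $S_1,\dots,S_n$ and output values $b_1,\dots,b_{n+1}$. Once that single evaluation is established, uniqueness delivers both implications at once: $f$ equals the displayed polynomial precisely when $f$ is that nested canalyzing function. Note that the factors $Q_{S_i}$ appearing in (\ref{NCFformula}) are the polynomials supplied by Lemma~\ref{Qr}, whose functional values are, by construction, $0$ on $S_i$ and $1$ off $S_i$.

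First I would reindex the sum by setting $m=n-j$, so that (\ref{NCFformula}) becomes $f=b_1+\sum_{m=1}^{n}(b_{m+1}-b_m)\prod_{i=1}^{m}Q_{S_i}(x_i)$. The crucial elementary observation, immediate from the definition of $Q_{S}$, is that for an input $(a_1,\dots,a_n)$ the partial product $\prod_{i=1}^{m}Q_{S_i}(a_i)$ equals $1$ when $a_i\notin S_i$ for every $i\le m$, and equals $0$ as soon as $a_i\in S_i$ for some $i\le m$. Hence the behavior of all the partial products is governed by a single quantity: the smallest index $k$ (if any) at which $a_k\in S_k$.

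Then I would run the case analysis against Definition~\ref{DefnNCF}. If $a_1\in S_1$, every partial product vanishes and the right-hand side collapses to $b_1$, matching the first line. If $a_i\notin S_i$ for $i<k$ but $a_k\in S_k$, the partial products equal $1$ for $m<k$ and $0$ for $m\ge k$, so the surviving block telescopes to $b_1+\sum_{m=1}^{k-1}(b_{m+1}-b_m)=b_k$, matching the $k$-th line. Finally, if $a_i\notin S_i$ for all $i$, every partial product equals $1$ and the full sum telescopes to $b_1+\sum_{m=1}^{n}(b_{m+1}-b_m)=b_{n+1}$, matching the last line. This exhausts $\mathbb{F}^n$, so the polynomial agrees with the nested canalyzing function at every point, and uniqueness in $R$ finishes the proof.

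The argument is therefore essentially a bookkeeping verification, its only genuine ingredients being the telescoping identity and the uniqueness of representation in $R$. The step I expect to demand the most care is the case analysis organized around the first index $k$ with $a_k\in S_k$: one must confirm that this lone parameter simultaneously controls which partial products survive and which telescoping block remains, so that all $n+1$ cases of Definition~\ref{DefnNCF} are recovered without overlap or omission. The hypothesis $b_n\neq b_{n+1}$ plays no role in the computation itself—it merely certifies that the data describe a genuine nested canalyzing function in all $n$ variables—so I would simply remark that it is carried along identically on both sides of the equivalence.
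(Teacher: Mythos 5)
Your proposal is correct and follows essentially the same route as the paper's own proof: both reduce the equivalence to a pointwise evaluation via the isomorphism $B_n\cong R$, then verify agreement with Definition~\ref{DefnNCF} by a case analysis on the first index $k$ with $a_k\in S_k$, using the telescoping sum $b_1+\sum_{m=1}^{k-1}(b_{m+1}-b_m)=b_k$. Your explicit reindexing and the remark that uniqueness in $R$ yields both implications at once are just cleaner bookkeeping of the argument the paper carries out by "iterating this process."
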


\begin{proof}

Let $f$ be a nested canalyzing function as in Definition~\ref{DefnNCF}, and let

\begin{displaymath}
\begin{array}{l}
g(x_1,\ldots,x_n)=
{\displaystyle\sum^{n-1}_{j=0}\biggl\{(b_{n-j+1}-b_{n-j}){\displaystyle\prod^{n-j}_{i=1}Q_{S_i}(x_i)}\biggr\}}\\ \\
\hspace{3cm}+ b_1
\end{array}
\end{displaymath}

Since $g$ has the right form to be in $R$, we can use
the isomorphism between $B_n$ and $R$, to reduce the proof to showing that
\begin{displaymath}
g(a_1,\ldots,a_n)=f(a_1,\ldots,a_n)
\end{displaymath}
for all $(a_1,\ldots,a_n)\in \mathbb{F}^n$.

If $a_1\in S_1$, then $Q_{S_1}(a_1)=0$, therefore
\begin{displaymath}
g(a_1,\ldots,a_n)=b_1\ \text{whenever}\ a_1\in S_1.
\end{displaymath}
If $a_1\notin S_1$ and $a_2\in S_2$, then $Q_{S_1}(a_1)=1$ and $Q_{S_2}(a_2)=0$, therefore
\begin{displaymath}
 g(a_1,\ldots,a_n)=(b_2-b_1)+b_1=b_2.
 \end{displaymath}

Iterating this process, if $a_1\notin S_1,a_2\notin S_2,\ldots,a_n\in S_n$, then
$Q_{S_1}(a_1)=1$, $Q_{S_2}(a_2)=1$,\dots, and $Q_{S_n}(a_n)=0$, therefore
 \begin{displaymath}
 g(a_1,\ldots,a_n)=(b_n-b_{n-1})+\dots+(b_2-b_1)+b_1=b_n.
 \end{displaymath}
 Finally, if $a_1\notin S_1,\ldots,a_n\notin S_n$, then $Q_{S_1}(a_1)=1$,
 $Q_{S_2}(a_2)=1$,\ldots, and $Q_{S_n}(a_n)=1$. Therefore,
 \begin{displaymath}
 \begin{array}{l}
 g(a_1,\dots,a_n)=\\
 (b_{n+1}-b_n)+\dots+(b_2-b_1)+b_1=b_{n+1}\\
 \end{array}
\end{displaymath}
 This completes the proof.
\end{proof}

\begin{thm}\label{ThmNCF1}
Let $f$ be a function in $R$. Then
the function $f$ is nested canalyzing in the variable order $x_{\sigma(1)},\dots,x_{\sigma(n)}$ with canalyzing input sets $S_1,\dots,S_n$ and canalyzing output values $b_1,\dots,b_n,b_{n+1}$ with $b_n\neq b_{n+1}$, if and only if it has the polynomial form

\begin{displaymath}
\begin{array}{l}
f(x_1,\cdots,x_n)=\\
{\displaystyle\sum^{n-1}_{j=0}\biggl\{(b_{n-j+1}-b_{n-j}){\displaystyle\prod^{n-j}_{i=1}Q_{s_{\sigma(i)}}(x_i)}\biggr\}}+b_1
\end{array}
\end{displaymath}

where $Q_{S_{\sigma(i)}}$ is defined as in Lemma~\ref{Qr}.

\end{thm}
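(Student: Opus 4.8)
The plan is to reduce Theorem~\ref{ThmNCF1} to the already-proved Theorem~\ref{ThmNCF} by a change of variables, rather than repeating the case analysis. The key observation is that the definition of a nested canalyzing function in the variable order $x_{\sigma(1)},\dots,x_{\sigma(n)}$ is nothing but the definition in the standard order $x_1,\dots,x_n$ applied after relabeling the inputs. So I would introduce new variable names $y_i = x_{\sigma(i)}$ for $i=1,\dots,n$, and observe that $f$ is nested canalyzing in the order $x_{\sigma(1)},\dots,x_{\sigma(n)}$ with input sets $S_1,\dots,S_n$ and output values $b_1,\dots,b_{n+1}$ precisely when the function $h(y_1,\dots,y_n) := f(x_1,\dots,x_n)$, viewed as a function of the $y_i$, is nested canalyzing in the standard order $y_1,\dots,y_n$ with the same input sets and output values.

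First I would make this relabeling precise: since $\sigma$ is a permutation, the assignment $x_{\sigma(i)} \mapsto y_i$ is a bijection on the coordinates, so evaluating $f$ at a point $(a_1,\dots,a_n)$ is the same as evaluating $h$ at the permuted point. Under this substitution, the piecewise conditions defining a nested canalyzing function in order $x_{\sigma(1)},\dots,x_{\sigma(n)}$ (namely $x_{\sigma(1)}\in S_1$; $x_{\sigma(1)}\notin S_1, x_{\sigma(2)}\in S_2$; and so on) become exactly the conditions defining a nested canalyzing function in order $y_1,\dots,y_n$ (namely $y_1\in S_1$; $y_1\notin S_1, y_2\in S_2$; and so on). Hence $f$ is nested canalyzing in the $\sigma$-order with the given data if and only if $h$ is nested canalyzing in the standard order with the same data.

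Next I would apply Theorem~\ref{ThmNCF} directly to $h$, which yields
\begin{displaymath}
h(y_1,\dots,y_n)=\sum_{j=0}^{n-1}\biggl\{(b_{n-j+1}-b_{n-j})\prod_{i=1}^{n-j}Q_{S_i}(y_i)\biggr\}+b_1
\end{displaymath}
as an identity in $R$. The final step is to substitute back $y_i = x_{\sigma(i)}$, so that $Q_{S_i}(y_i)=Q_{S_i}(x_{\sigma(i)})$, and to reindex the factors of each product by the index $\sigma(i)$, which recovers exactly the claimed formula in the statement. In doing so I must pay attention to the notational discrepancy between $S_i$ and $s_{\sigma(i)}$ / $S_{\sigma(i)}$ appearing in the displayed formula versus the text, and confirm that the subscript convention is consistent so that each $Q$-factor is paired with the variable that actually feeds it.

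The main obstacle I anticipate is purely bookkeeping: keeping the two indexing schemes straight so that the canalyzing input set $S_i$ is always applied to the variable occupying the $i$-th position in the canalyzing cascade, namely $x_{\sigma(i)}$, and not to $x_i$. There is no new mathematical content beyond Theorem~\ref{ThmNCF}; the substitution $y_i = x_{\sigma(i)}$ is a bijection of coordinates and hence an automorphism of $R$ that carries one identity to the other, so once the relabeling is set up carefully the result follows immediately.
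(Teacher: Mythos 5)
Your argument is correct, but it is not the route the paper takes: the paper's proof of Theorem~\ref{ThmNCF1} consists of the single remark that one repeats, mutatis mutandis, the case-by-case evaluation in the proof of Theorem~\ref{ThmNCF} with $x_{\sigma(1)},\dots,x_{\sigma(n)}$ in place of $x_1,\dots,x_n$, i.e.\ a direct re-derivation from Definition~\ref{DefnNCF}. You instead reduce to Theorem~\ref{ThmNCF} via the relabeling $y_i=x_{\sigma(i)}$: the function $h(y_1,\dots,y_n)=f(x_1,\dots,x_n)$ is nested canalyzing in the standard order with data $S_1,\dots,S_n$, $b_1,\dots,b_{n+1}$ exactly when $f$ is nested canalyzing in the $\sigma$-order with the same data, and since permuting coordinates is a bijection of $\mathbb{F}^n$ (equivalently, induces an automorphism of $R$, the ideal $\langle x_i^p-x_i\rangle$ being stable under permutation), the polynomial identity of Theorem~\ref{ThmNCF} for $h$ transports to the desired identity for $f$, in both directions of the equivalence. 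This is a genuine alternative: it reuses the proved theorem as a black box and avoids duplicating the cascade computation, at the modest cost of setting up the substitution formalism that the paper's implicit direct argument does not need.

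One caution about your final step. What the substitution actually produces is
\begin{displaymath}
f(x_1,\dots,x_n)=\sum_{j=0}^{n-1}\biggl\{(b_{n-j+1}-b_{n-j})\prod_{i=1}^{n-j}Q_{S_i}\bigl(x_{\sigma(i)}\bigr)\biggr\}+b_1,
\end{displaymath}
in which the cascade-position set $S_i$ is applied to the variable $x_{\sigma(i)}$, exactly as Definition~\ref{DefnNCF} pairs them. This is \emph{not} literally the displayed formula of Theorem~\ref{ThmNCF1}, which pairs $S_{\sigma(i)}$ with $x_i$; for a nontrivial $\sigma$ no reindexing converts one product into the other, since the $j$-th product involves the variables $\{x_{\sigma(1)},\dots,x_{\sigma(n-j)}\}$ in your formula but $\{x_1,\dots,x_{n-j}\}$ in the paper's. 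So your promise to recover ``exactly the claimed formula'' cannot be kept as stated: what your proof recovers is the formula consistent with Definition~\ref{DefnNCF}, and the mismatch is an indexing slip in the paper's statement, not a gap in your reasoning. You should say this explicitly and write down the corrected formula, rather than assert exact agreement with the printed one.
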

\begin{proof}

The proof is very similar to the proof of Theorem~\ref{ThmNCF}.

\end{proof}


\section{The algebraic variety of nested canalyzing functions}
Here we derive a parametrization for the coefficients of any nested canalyzing function.
We will use this parametrization to derive a formula to compute the number nested canalyzing functions
for a given number of variables within a finite field in the next section.

Recall that elements of $B_n=\{f:\mathbb{F}^n\rightarrow \mathbb{F}\}$ can be seen as elements of

\begin{displaymath}
R=\biggl\{\sum_{\substack{(i_1,\dots,i_n)\\i_t\in \mathbb{F}\\t=1,\dots,n}}C_{i_1\dots i_n}x^{i_1}_1x^{i_2}_2\cdots x^{i_n}_n\biggr\}
\end{displaymath}

Now, as a vector space over $\mathbb{F}$, $R$ is isomorphic to $\mathbb{F}^{p^n}$ via the correspondence
\begin{displaymath}
\sum_{\substack{(i_1,\dots,i_n)\\i_t\in \mathbb{F}\\t=1,\dots,n}}C_{i_1\dots i_n}x^{i_1}_1x^{i_2}_2\cdots x^{i_n}_n\leftrightarrow(\dots,C_{i_1\dots i_n},\dots)
\end{displaymath}

We will identify the set of nested canalyzing functions in R with a subset $V^{ncf}$ of
$\mathbb{F}^{p^n}$ by imposing relations on the coordinates of its elements. We are going to use the following notation:

\begin{nota}

For $r\in \mathbb{F}$ and for $S\subset \mathbb{F}$,

\begin{displaymath}
\begin{array}{l}
C_{[r]}=C_{r\dots r}\\\\
C^j_{[r]\diagdown\{i\}}=C_{r\dots j\dots r}\text{ where the value $j$}\\\\
\text{ goes in the $i$th position.}\\\\
C^{i_1,\dots,i_j}_{[r]\diagdown\{1,\dots,j\}}=C_{i_1,\dots,i_ j,r\dots r}\\\\
\text{ where the values $i_1,\dots,i_j$ go in the} \\\\
$1,\dots,j$\text{ positions, respectively.}\\\\
C_{i_1\dots i_{n-j}}=C_{i_1\dots i_{n-j}0\dots0},\text{ i.e.},\ i_{n-j}\neq0\ \text{and}\\\\
i_s=0 \text{ for all } s>{n-j}.\\\\
S^c=\mathbb{F} \diagdown S, \text{ i.e., $S^c$ denote the complement of $S$}.
\end{array}
\end{displaymath}

\end{nota}

\begin{thm}\label{Parametrization}

Let $f\in B_n$ be given by
\begin{equation}\label{Eq1}
f(x_1,\dots,x_n) = \sum_{\substack{(i_1,\dots,i_n)\\i_t\in \mathbb{F}\\
t=1,\dots,n}}C_{i_1\dots i_n}x^{i_1}_1x^{i_2}_2\cdots x^{i_n}_n
\end{equation}
The polynomial $f(x_1,\dots,x_n)$ is a nested canalyzing function in the variable order $x_1,\dots,x_n$ with canalyzing input sets $S_1,\dots,S_n$ and canalyzing output values $b_1,\dots,b_{n+1}$ if and only if its coefficients
satisfy the following equations:

\begin{equation}\label{Parametrization1}
C_{i_1\dots i_{n-\mu}}=C^{p-1,\dots,p-1}_{[0]\diagdown\{1,\dots,n-\mu\}}\ {\displaystyle\prod^{n-\mu}_{j=1}C^{-1}_{[p-1]}C^{i_j}_{[p-1]\diagdown\{j\}}}\
\end{equation}

for $\mu = 0,\dots,n-1$, where

\begin{equation}\label{Parametrization2}
C_{[p-1]} = (b_{n+1}-b_{n}) (p-1)^{n} {\displaystyle\prod^{n}_{i=1} \mid S^c_i\mid }\ ,
\end{equation}

\begin{equation} \label{Parametrization3}
C^{i_j}_{[p-1]\diagdown\{j\}} = (p-1) \mid S^c_j \mid^{-1} \biggl({\displaystyle\sum_{\substack{r\in S^c_j}}r^{p-1-i_j}}\biggr) C_{[p-1]},
\end{equation}

for $i_j\neq 0, p-1$, and

\begin{equation} \label{Parametrization4}
C^{0}_{[p-1]\diagdown\{j\}} = (p-1) \mid S^c_j \mid^{-1} Q_{s_j}(0) C_{[p-1]},
\end{equation}
for  $j=1,\dots,n-1$.

\begin{equation} \label{Parametrization5}
\begin{array}{l}
C^{p-1,\dots,p-1}_{[0]\diagdown\{1,\dots,n-\mu \}}=\\\\
\biggl[{\displaystyle\prod^{n-\mu}_{i=1} (p-1) \mid S^c_i\mid}\biggr]{\displaystyle\sum^{\mu}_{j=0}\biggl\{ B_{n-j} \prod^{n-j}_{i = n-\mu+1}Q_{s_j}(0)}\biggr\},
\end{array}
\end{equation}

\begin{equation}\label{Parametrization6}
C^{p-1}_{[0]\diagdown\{1\}}C^{0}_{[p-1]\diagdown\{1\}} = C_{[p-1]}\biggl (C_{[0]} - b_1\biggr)
\end{equation}

where $Q_{S_j}(0)$ is defined as in Lemma~\ref{Qr} and $B_{n-j} = (b_{n-j+1}-b_{n-j})$ for $j=1,\dots,n-1$.

\end{thm}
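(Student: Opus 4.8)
The plan is to reduce everything to the explicit polynomial representation already established in Theorem~\ref{ThmNCF}. By that theorem $f$ is nested canalyzing in the order $x_1,\dots,x_n$ with the prescribed data if and only if
\[
f = b_1 + \sum_{k=1}^{n} B_k \prod_{i=1}^{k} Q_{S_i}(x_i),\qquad B_k = b_{k+1}-b_k ,
\]
so it suffices to expand this single product-sum, read off each coefficient $C_{i_1\dots i_n}$, and verify that the resulting values are exactly those described by \eqref{Parametrization1}--\eqref{Parametrization6}. First I would substitute the one-variable polynomial form of each $Q_{S_i}(x_i)$ supplied by Lemma~\ref{Qr}. Writing $q_i^{(m)}$ for the coefficient of $x_i^m$ in $Q_{S_i}$, I record that $q_i^{(0)}=Q_{S_i}(0)$, that the top coefficient is the signed cardinality $q_i^{(p-1)}=(p-1)|S_i^c|$, and that for intermediate $m$ the coefficient is the power sum $\sum_{r\in S_i^c} r^{\,p-1-m}$ up to the normalization fixed in Lemma~\ref{Qr}.

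The central structural observation is a factorization of the coefficients according to the highest-indexed variable that actually occurs. Fix a monomial whose last active variable sits in position $n-\mu$, that is $i_{n-\mu}\neq 0$ and $i_s=0$ for $s>n-\mu$. Such a monomial can only arise from the terms $k\ge n-\mu$, and in each of those the factors $Q_{S_{n-\mu+1}},\dots,Q_{S_k}$ must contribute their constant terms $Q_{S_i}(0)$; hence
\[
C_{i_1\dots i_{n-\mu}}=\Bigl(\prod_{j=1}^{n-\mu} q_j^{(i_j)}\Bigr)\sum_{j=0}^{\mu} B_{n-j}\prod_{i=n-\mu+1}^{n-j} Q_{S_i}(0).
\]
I would then extract the special cases that serve as the base of the parametrization: setting all active exponents to $p-1$ isolates the tail sum and, after substituting $q_i^{(p-1)}=(p-1)|S_i^c|$, produces $C^{p-1,\dots,p-1}_{[0]\setminus\{1,\dots,n-\mu\}}$, yielding \eqref{Parametrization5}; taking $\mu=0$ with a single non-extremal exponent in position $j$ produces $C^{i_j}_{[p-1]\setminus\{j\}}$ in terms of the power sum $\sum_{r\in S_j^c}r^{\,p-1-i_j}$ and of $C_{[p-1]}$, giving \eqref{Parametrization3} and, for the exponent $0$, \eqref{Parametrization4}; the fully extremal monomial gives \eqref{Parametrization2}. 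Dividing the general factorization by the extremal one shows $C_{[p-1]}^{-1}C^{i_j}_{[p-1]\setminus\{j\}}=q_j^{(i_j)}/q_j^{(p-1)}$, so the product in \eqref{Parametrization1}, multiplied by $C^{p-1,\dots,p-1}_{[0]\setminus\{1,\dots,n-\mu\}}$, cancels the extremal factors $q_j^{(p-1)}$ against the power-sum numerators and rebuilds exactly $\bigl(\prod_{j=1}^{n-\mu} q_j^{(i_j)}\bigr)$ times the tail sum; this is \eqref{Parametrization1}. Equation \eqref{Parametrization6} is the same identity specialized to $C_{[0]}=f(0,\dots,0)=b_1+\sum_k B_k\prod_{i=1}^k Q_{S_i}(0)$, rewritten as a product of two single-index coefficients.

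This settles the forward implication; for the converse I would argue by uniqueness. Equations \eqref{Parametrization2}--\eqref{Parametrization5} give every base coefficient explicitly in terms of $(S_i,b_i)$, \eqref{Parametrization1} then determines every non-constant coefficient, and \eqref{Parametrization6} fixes $C_{[0]}$, so the system has a unique solution; since the nested canalyzing function carrying that data already satisfies the system, any $f$ obeying the relations has identical coefficients and therefore equals it in $R$, hence is nested canalyzing. The main obstacle I anticipate is the finite-field bookkeeping: matching every factor of $(p-1)=-1$ and every power sum to the stated formulas requires care with the identities $\binom{p-1}{m}\equiv(-1)^m\pmod p$ and $r^{\,p-1}=1$ for $r\neq 0$, together with the precise normalization in Lemma~\ref{Qr}. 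The combinatorial core, by contrast, is routine once the ``last active variable'' decomposition is in place.
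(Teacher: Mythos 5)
Your proposal is correct and follows essentially the same route as the paper's own proof in Appendix~2: expand $f$ via Theorem~\ref{ThmNCF} using the coefficients of $Q_{S_i}$ from Lemmas~\ref{Pr} and~\ref{Qr}, factor each coefficient $C_{i_1\dots i_{n-\mu}}$ according to the last active variable, and recover Equations~\ref{Parametrization1}--\ref{Parametrization6} by specializing to the extremal exponents and dividing. Your converse, phrased as a uniqueness argument, is just a repackaging of the paper's explicit reconstruction (the equations pin down every coefficient, so $f$ must coincide with the nested canalyzing function carrying the same data), so the two proofs are essentially identical.
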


The proof of Theorem \ref{Parametrization} is given in Appendix 2.
We now need to provide a similar parametrization for functions that are nested canalyzing with
respect to an arbitrary variable ordering.

\begin{thm} \label{ParametrizationWithPermutation}

Let $f\in B_n$ given by
\begin{equation}\label{Eq1}
f(x_1,\dots,x_n) = \sum_{\substack{(i_1,\dots,i_n)\\i_t\in \mathbb{F}\\t=1,\dots,n}}
C_{i_1\dots i_n}x^{i_1}_1x^{i_2}_2\cdots x^{i_n}_n.
\end{equation}
The polynomial $f(x_1,\dots,x_n)$ is a nested canalyzing function in the variable order $x_{\sigma(1)},\dots,x_{\sigma(n)}$ with canalyzing input sets $S_1,\dots,S_n$ and canalyzing output values $b_1,\dots,b_{n+1}$ if and only if,
\begin{equation}\label{Parametrization7}
\begin{array}{l}
C_{ i_1 \dots i_{n-\mu} } = \\
C^{p-1,\dots,p-1}_{[0] \diagdown \{ \sigma(i_1) \dots \sigma(i_{n-\mu}) \} } \ {\displaystyle\prod^{n-\mu}_{j=1}C^{-1}_{[p-1]}C^{ i_j  }_{[p-1]\diagdown \{ \sigma( j ) \} }}
\end{array}
\end{equation}
for $\mu = 0,\dots,n-1$, where

\begin{equation}\label{Parametrization8}
\begin{array}{l}
C_{[p-1]} = \\
(b_{n+1}-b_{n}) (p-1)^{n} {\displaystyle\prod^{n}_{i=1} \mid S^c_{ \sigma(i) } \mid },
\end{array}
\end{equation}

\begin{equation} \label{Parametrization9}
\begin{array}{l}
C^{ i_j  }_{[p-1]\diagdown \{ \sigma( j ) \}} = \\\\
(p-1) \mid S^c_{ \sigma( j ) } \mid^{-1} \biggl({\displaystyle\sum_{\substack{r\in S^c_{ \sigma( j ) } }}r^{p-1- i_j }}\biggr) C_{[p-1]},
\end{array}
\end{equation}

for $  i_j  \neq 0, p-1$, and

\begin{equation} \label{Parametrization10}
\begin{array}{l}
C^{0}_{[p-1]\diagdown \{ \sigma( j ) \} } = \\\\
(p-1) \mid S^c_{ \sigma( j ) } \mid^{-1} Q_{s_{ \sigma( j ) } }(0) C_{[p-1]},
\end{array}
\end{equation}
for $j=1,\dots,n-1$.

\begin{equation} \label{Parametrization11}
\begin{array}{l}
C^{p-1,\dots,p-1}_{[0]\diagdown \{ \sigma(i_1) \dots \sigma(i_{n-\mu}) \}} = \\
\biggl[{\displaystyle \prod^{n-\mu}_{i=1} (p-1) \mid S^c_{ \sigma( i ) } \mid}\biggr]\\
{\displaystyle\sum^{\mu}_{j=0}\biggl\{ B_{n-j} \prod^{n-j}_{i = n-\mu+1}Q_{S_ { \sigma( i ) } } (0) }\biggr\},
\end{array}
\end{equation}

\begin{equation}\label{Parametrization12}
\begin{array}{l}
C^{p-1}_{[0]\diagdown \{ \sigma( 1 ) \} }C^{0}_{[p-1]\diagdown \{ \sigma( 1 ) \}} = \\\\
C_{[p-1]}\biggl (C_{[0]} - b_1\biggr)
\end{array}
\end{equation}

where $Q_{s_{ \sigma( j ) } }(0)$ is defined as in Lemma~\ref{Qr} and

\begin{displaymath}
B_{n-j}=(b_{n-j+1}-b_{n-j}) \text{ for } j=1,\dots,n-1.
\end{displaymath}

\end{thm}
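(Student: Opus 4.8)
The plan is to reduce this statement to Theorem~\ref{Parametrization}, which handles the identity ordering, by relabeling the variables according to $\sigma$. The conceptual point is that permuting the variables of a polynomial is a ring automorphism of $\mathbb{F}[x_1,\dots,x_n]$ which permutes the generators $x_i^p-x_i$ of $I$, hence fixes $I$ and descends to an automorphism of $R$. Such an automorphism carries nested canalyzing functions to nested canalyzing functions and merely permutes the coordinates of the coefficient vector in $\mathbb{F}^{p^n}$; this is exactly why the present theorem has the same shape as Theorem~\ref{Parametrization} with $\sigma$ inserted into the index positions. Concretely, given $f$ as in the statement, I would introduce the relabeled polynomial
\begin{displaymath}
g(x_1,\dots,x_n)=f(x_{\sigma^{-1}(1)},\dots,x_{\sigma^{-1}(n)}),
\end{displaymath}
chosen so that the $k$th variable of $g$ plays the role of the variable $x_{\sigma(k)}$ tested at the $k$th stage of $f$. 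Writing $g=\sum D_{e_1\dots e_n}x_1^{e_1}\cdots x_n^{e_n}$ and comparing monomials yields the coefficient correspondence
\begin{displaymath}
D_{e_1\dots e_n}=C_{e_{\sigma^{-1}(1)}\dots e_{\sigma^{-1}(n)}},
\end{displaymath}
which is a bijection between the coefficients of $g$ and those of $f$.

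The first step is to verify that $f$ is nested canalyzing in the order $x_{\sigma(1)},\dots,x_{\sigma(n)}$ with input sets $S_1,\dots,S_n$ and output values $b_1,\dots,b_{n+1}$ if and only if $g$ is nested canalyzing in the identity order $x_1,\dots,x_n$ with the \emph{same} input sets and output values. This is immediate from Definition~\ref{DefnNCF}: evaluating $g$ at $(a_1,\dots,a_n)$ is the same as evaluating $f$ at the point whose $\sigma(k)$th coordinate is $a_k$, so the $k$th defining case of $g$ (testing $a_k\in S_k$) matches the $k$th defining case of $f$ (testing its $\sigma(k)$th argument against $S_k$), and conversely. One may equivalently invoke the explicit polynomial forms of Theorems~\ref{ThmNCF} and~\ref{ThmNCF1}, which differ precisely by this substitution.

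With the equivalence in hand, I would apply Theorem~\ref{Parametrization} to $g$: its coefficients $D_{e_1\dots e_n}$ satisfy the identity-ordering relations \eqref{Parametrization1}--\eqref{Parametrization6} if and only if $g$ is nested canalyzing in the standard order. It then remains to substitute the correspondence $D_{e_1\dots e_n}=C_{e_{\sigma^{-1}(1)}\dots e_{\sigma^{-1}(n)}}$ into each of these relations and check that they become exactly \eqref{Parametrization7}--\eqref{Parametrization12}. The key computation is that a coefficient of $g$ supported on the prefix positions $1,\dots,n-\mu$ corresponds to a coefficient of $f$ supported on the $\sigma$-prefix $\sigma(1),\dots,\sigma(n-\mu)$; likewise the coefficient of $g$ equal to $e_j$ in position $j$ and $p-1$ elsewhere maps to the coefficient of $f$ equal to $e_j$ in position $\sigma(j)$ and $p-1$ elsewhere. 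This explains the appearance of the set differences $[0]\diagdown\{\sigma(1),\dots,\sigma(n-\mu)\}$ and the singletons $[p-1]\diagdown\{\sigma(j)\}$, and why each cardinality $|S^c_i|$ is replaced by $|S^c_{\sigma(i)}|$ while $C_{[p-1]}$ and $C_{[0]}$, being supported on constant index vectors, are left unchanged.

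The main obstacle is purely bookkeeping: keeping the action of $\sigma$ on multi-index positions consistent throughout, and in particular tracking the passage from $\sigma^{-1}$ in the definition of $g$ to the $\sigma$ that ultimately labels the index positions of $f$ (position $\sigma(k)$ of $f$ is position $k$ of $g$). No new estimate or construction is needed beyond Theorem~\ref{Parametrization}; the automorphism invariance of the defining relations does all of the real work, and the permuted relations \eqref{Parametrization7}--\eqref{Parametrization12} are simply the images of \eqref{Parametrization1}--\eqref{Parametrization6} under the coefficient bijection.
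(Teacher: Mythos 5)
Your proposal is correct, but it takes a genuinely different route from the paper. The paper's entire proof of Theorem~\ref{ParametrizationWithPermutation} is the remark that it ``follows the same line of reasoning'' as Theorem~\ref{Parametrization}, i.e.\ the intended argument re-runs the Appendix~2 expansion (using the permuted polynomial form of Theorem~\ref{ThmNCF1}) with $\sigma$ carried through all the index bookkeeping. You instead reduce the permuted statement to the already-proven identity-order statement: the substitution $g(x_1,\dots,x_n)=f(x_{\sigma^{-1}(1)},\dots,x_{\sigma^{-1}(n)})$ induces an automorphism of $R$ (it permutes the generators $x_i^p-x_i$ of $I$), the NCF property transports correctly (your check that $c_{\sigma(k)}=a_k$ under this substitution is exactly right), and the relations of Theorem~\ref{Parametrization} for the coefficients $D_{e_1\dots e_n}=C_{e_{\sigma^{-1}(1)}\dots e_{\sigma^{-1}(n)}}$ of $g$ pull back to relations on the coefficients of $f$. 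This buys you something the paper's sketch does not: Theorem~\ref{Parametrization} is reused as a black box, so no second expansion computation is needed, and the ``same shape with $\sigma$ inserted'' phenomenon is explained conceptually rather than re-derived.

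One caveat you should make explicit. Under Definition~\ref{DefnNCF} the input sets are indexed by stage (stage $k$ tests $x_{\sigma(k)}\in S_k$), so your reduction attaches the set $S_j$ to the coefficient position $\sigma(j)$, yielding, e.g., $C^{i_j}_{[p-1]\diagdown\{\sigma(j)\}}=(p-1)\mid S^c_j\mid^{-1}\bigl(\sum_{r\in S^c_j}r^{p-1-i_j}\bigr)C_{[p-1]}$, whereas the paper's Equations~\ref{Parametrization9}--\ref{Parametrization11} attach $S_{\sigma(j)}$ to position $\sigma(j)$. So the transported relations do not literally ``become exactly'' the displayed equations; they agree only after re-indexing the sets by variable rather than by stage (replacing $S_k$ by $S_{\sigma(k)}$ in the hypothesis). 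This mismatch originates in the paper's own inconsistent conventions (compare Theorem~\ref{ThmNCF1}, where $Q_{S_{\sigma(i)}}$ is applied to $x_i$ rather than to $x_{\sigma(i)}$), so it is not a flaw in your argument, but since you assert exact agreement you should state which indexing convention you adopt and perform that one renaming.
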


\begin{proof}
The proof follows the same line of reasoning used for the proof of Theorem~\ref{Parametrization}.
\end{proof}

\begin{rem}\label{RemComplement}

Notice from Equation~\ref{Parametrization8} that a nested canalyzing function
$f(x_1,\dots,x_n)$ with canalyzing input sets $S_1,\dots,S_n$ and canalyzing output values
$b_1,\dots,b_{n},b_{n+1}$ is also a nested canalyzing function
in the same variable order with canalyzing input sets $S_1,\dots,S^c_n$ and canalyzing output values
$b_1,\dots,b_{n+1},b_{n}$. In fact, Equation~\ref{Parametrization8} implies that

\begin{displaymath}
\begin{array}{ l}
C_{[p-1]} = (b_{n+1}-b_{n}) (p-1)^{n} {\displaystyle\prod^{n}_{i=1} \mid S^c_{ \sigma(i) } \mid } \\ \\
 = (b_{n}-b_{n+1}) \mid S_{ \sigma(n) } \mid (p-1)^{n} {\displaystyle\prod^{n-1}_{i=1} \mid S^c_{ \sigma(i) } \mid }
\end{array}
\end{displaymath}

Note that $- \mid S^c_{ \sigma(n) } \mid = \mid S_{\sigma(n)} \mid$ (mod $p$).

\end{rem}

\begin{rem} \label{RemDual}

For every nonzero $b \in \mathbb{F}$ and for every nested canalyzing function
$f(x_1,\dots,x_n)$ in the variable order $x_{\sigma(1)},\dots,x_{\sigma(n)}$
with canalyzing input sets $S_1,\dots,S_n$ and canalyzing output values $b_1,\dots,b_{n+1}$,
$f(x_1,\dots,x_n) + b$
is also a nested canalyzing function in the same variable order and
with the same canalyzing input sets $S_1,\dots,S_n$ and canalyzing output values
$b_1+b,\dots,b_{n+1} + b$. In fact, for $S_1,\dots,S_n$ and $b_1+b,\dots,b_{n+1} + b$
Equations~\ref{Parametrization7} -~\ref{Parametrization11} stay the same and
Equation~\ref{Parametrization12} becomes
\begin{displaymath}
C^{p-1}_{[0]\diagdown \{ \sigma( 1 ) \} }C^{0}_{[p-1]\diagdown \{ \sigma( 1 ) \}} = C_{[p-1]}\biggl (C_{[0]} - ( b_1 +b )\biggr)
\end{displaymath}
\end{rem}

\section{Number of nested canalyzing functions}

Here we derive a formula to compute the number of nested canalyzing functions 
in a given number of variables $n$ and a given finite field $\mathbb F$ with $p$ elements.

Let us denote the number of distinct nested canalyzing functions in $n$ variables by $NCF(n)$ and the number of distinct nested canalyzing functions that can be written as a product of $r$ nested canalyzing functions by $RNCF(n,r)$ (R for reducible). It is clear from Formula~\ref{NCFformula} that any nested canalyzing function can be written as a product of at most $n$ nested canalyzing functions. Hence $RNCF(n,r) = 0$ for all $r > n$. We will denote the number of distinct nested canalyzing functions in $n$ variables that cannot be written as a product of two or more nested canalyzing functions by $INCF(n)$
(I for irreducible) and the number of distinct nested canalyzing functions in $n$ variables that can be written
as a product of two or more nested canalyzing functions by $RNCF(n)$. Then
\begin{displaymath}
RNCF(n) = \sum^n_{r=2} RNCF(n,r)
\end{displaymath}
The following lemma relates $RNCF(n)$ and $INCF(n)$.
\begin{lem}\label{Irreducibles}
For each natural number $n$, we have
$INCF(n) = (p-1)RNCF(n)$.
\end{lem}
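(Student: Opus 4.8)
The plan is to exhibit a free action of the additive group of $\mathbb{F}$ on the set of all nested canalyzing functions in $n$ variables and to show that reducibility singles out exactly one function in each orbit. By Remark~\ref{RemDual}, for every $b\in\mathbb{F}$ the assignment $f\mapsto f+b$ sends a nested canalyzing function with output values $b_1,\dots,b_{n+1}$ to the nested canalyzing function with the same variable order and the same canalyzing input sets but with output values $b_1+b,\dots,b_{n+1}+b$. These maps compose additively, and $f+b=f$ forces $b=0$, so together they define a free action of $(\mathbb{F},+)$ on the set of nested canalyzing functions; consequently every orbit has exactly $p$ elements. It therefore suffices to prove that each orbit contains exactly one reducible function, since then summing over orbits yields the claimed relation between $INCF(n)$ and $RNCF(n)$.

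The heart of the argument is the characterization: a nested canalyzing function $f$ is reducible if and only if it vanishes identically on a full canalyzing slice, equivalently if and only if its first output value satisfies $b_1=0$. For the easy direction, suppose $b_1=0$ and write $f$ in the order $x_{\sigma(1)},\dots,x_{\sigma(n)}$ using Formula~\ref{NCFformula}. Every summand then contains the factor $Q_{S_1}(x_{\sigma(1)})$, and factoring it out gives
\begin{displaymath}
f=Q_{S_1}(x_{\sigma(1)})\cdot h,
\end{displaymath}
where $h$ is the nested canalyzing function in the variables $x_{\sigma(2)},\dots,x_{\sigma(n)}$ with input sets $S_2,\dots,S_n$ and output values $b_2,\dots,b_{n+1}$, the condition $b_n\neq b_{n+1}$ being inherited from $f$. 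Since $Q_{S_1}(x_{\sigma(1)})$ is itself nested canalyzing (Theorem~\ref{ThmNCF}), this writes $f$ as a product of two nested canalyzing functions, so $f$ is reducible.

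The hard part, and the step I expect to be the main obstacle, is the converse: a reducible function must have $b_1=0$. Suppose $f=g_1\cdots g_r$ with $r\geq 2$ and each $g_k$ nested canalyzing, and let $x_{\sigma(1)}$, $S_1$, $b_1$ be the outermost canalyzing variable of $f$, its input set, and the associated output value, so that $f\equiv b_1$ on the slice $\{x_{\sigma(1)}\in S_1\}$. If $b_1\neq 0$, then no factor vanishes on this slice, so there the product $\prod_k g_k$ is a nonzero constant while $f$ is non-constant off the slice. I plan to rule this out by showing that for a product of nested canalyzing functions to be again nested canalyzing, at least one factor must already be identically zero on a full canalyzing slice of $f$, which then forces $b_1=0$. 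Making this precise -- controlling how the individual canalyzing structures of the $g_k$ interact on $\{x_{\sigma(1)}\in S_1\}$ and ruling out multiplicative cancellation among the factors -- is where the real work lies; I would carry it out by restricting each $g_k$ to $x_{\sigma(1)}$ and analyzing the resulting one-variable behavior.

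Granting the characterization, the proof concludes quickly. An arbitrary nested canalyzing function is constant on a full slice $\{x_j\in S\}$ only when $x_j$ is an outermost canalyzing variable, and the constant value attained there is always $b_1$; hence within the orbit $\{f+b:b\in\mathbb{F}\}$ the function $f+b$ vanishes on a full canalyzing slice precisely when $b_1+b=0$, that is, for the single value $b=-b_1$. Each orbit therefore contains exactly one reducible function and $p-1$ irreducible ones. Writing $N$ for the number of orbits, we obtain $RNCF(n)=N$ and $INCF(n)=(p-1)N$, and combining these gives $INCF(n)=(p-1)\,RNCF(n)$, as claimed.
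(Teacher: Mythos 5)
Your strategy coincides with the paper's: both proofs rest on the shift action $f\mapsto f+b$ of Remark~\ref{RemDual} together with the factorization coming from Formula~\ref{NCFformula} when $b_1=0$, and your orbit bookkeeping (free action of $(\mathbb{F},+)$, orbits of size $p$, one reducible function per orbit) is a cleaner way to organize exactly that argument. Your ``easy direction'' ($b_1=0$ implies reducible) is correct and matches the computation the paper does. The problem is that the other half of your characterization --- reducible implies $b_1=0$, equivalently, adding a nonzero constant to a reducible function yields an irreducible one --- is never proved: you state it as a plan (``I plan to rule this out\dots'', ``is where the real work lies; I would carry it out\dots''). This half is not a technicality; it is precisely what guarantees that an orbit cannot contain two reducible functions, and the count $INCF(n)=(p-1)RNCF(n)$ is unsupported without it. (Your auxiliary claim that a nested canalyzing function can be constant on a full slice only when that slice belongs to an outermost variable, with constant value $b_1$, is likewise asserted rather than proved, though that is a lesser issue.) So, as submitted, the proposal has a genuine gap.

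The gap can in fact be closed in a few lines along the route you gesture at. Write $f=g_1\cdots g_r$, $r\geq 2$, with the $g_k$ nested canalyzing in pairwise disjoint sets of variables (this is how products are used in the proof of Theorem~\ref{NumberNCF}), and suppose the outermost output value satisfies $b_1\neq 0$, so that $f\equiv b_1$ on the slice $\{x_{\sigma(1)}\in S_1\}$. Choose a factor $g_k$ whose variables do not include $x_{\sigma(1)}$, and fix all variables outside $g_k$ at values with $x_{\sigma(1)}\in S_1$; the remaining factors then multiply to a constant $\gamma$, and $\gamma\neq 0$ because $\gamma\cdot g_k=b_1\neq 0$. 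Hence $g_k\equiv b_1\gamma^{-1}$ on its whole domain. But a nested canalyzing function is never constant, since its last two output values satisfy $b_n\neq b_{n+1}$; this contradiction forces $b_1=0$. It is worth noting that the paper's own proof elides exactly the same point: it justifies ``for each $f\in RNCF(n)$, there are $p-1$ functions in $INCF(n)$'' by citing only Remark~\ref{RemDual}, which says nothing about irreducibility of the shifted functions. So you correctly isolated where the real content of the lemma lies; you just did not supply it.
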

\begin{proof}
From Remark~\ref{RemDual}, for each $f\in RNCF(n)$, there are $p-1$ functions in $INCF(n)$. Conversely,  
Let $f\in INCF(n)$. From formula~\ref{NCFformula},
\begin{displaymath}
\begin{array}{l}
f(x_1,\ldots,x_n)=\\
{\displaystyle\sum^{n-1}_{j=0}\biggl\{(b_{n-j+1}-b_{n-j}){\displaystyle\prod^{n-j}_{i=1}Q_{S_i}(x_i)}\biggr\}}+b_1

\end{array}
\end{displaymath}

Let $b = p-b_1$. Hence 
\begin{displaymath}
\begin{array}{l}
f(x_1,\ldots,x_n)+b=\\
{\displaystyle\sum^{n-1}_{j=0}\biggl\{(b_{n-j+1}-b_{n-j}){\displaystyle\prod^{n-j}_{i=1}Q_{S_i}(x_i)}\biggr\}}\\ \\
= Q_{S_i}(x_i)\biggl[{\displaystyle\sum^{n-2}_{j=0}\biggl\{(b_{n-j+1}-b_{n-j}){\displaystyle\prod^{n-j}_{i=2}Q_{S_i}(x_i)}\biggr\}}\\
\hspace{2.5cm}+(b_2-b_1)\biggr]
\end{array}
\end{displaymath}
Therefore, any element of $INCF(n)$ can be obtained from an element of $RNCF(n)$.
\end{proof}
The following theorem gives us a formula to compute the number of
nested canalyzing functions for a given number of variables $n$.
\begin{thm} \label{NumberNCF}
The number of nested canalyzing functions in $n$ variables, denoted by $NCF(n)$, is given by
\begin{displaymath}
NCF(n) = p RNCF(n),
\end{displaymath}
where
\begin{displaymath}
RNCF(1) = (p-1)^2 ,
\end{displaymath}
\begin{displaymath}
RNCF(2) = 4 (p-1)^4 ,
\end{displaymath}
and, for $n \geqslant 3$,
\begin{displaymath}
\begin{array}{l}
RNCF(n) =  \\\\
{\displaystyle\sum^{n-1}_{r=2} \binom{n}{r-1} 2^{r-1} (p-1)^{r} RNCF(n-r+1)}\\ \\
\hspace{2cm}+ 2^{n-1}(p-1)^{n+1}(2+n(p-2)).
\end{array}
\end{displaymath}
\end{thm}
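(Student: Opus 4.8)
My plan is to separate the count into reducible and irreducible functions, dispatch the identity $NCF(n)=p\,RNCF(n)$ immediately, and then build the recursion for $RNCF(n)$ by peeling off the leading canalyzing layer. For the identity, the classes of reducible and irreducible functions partition the set of all nested canalyzing functions, so $NCF(n)=RNCF(n)+INCF(n)$; Lemma~\ref{Irreducibles} supplies $INCF(n)=(p-1)RNCF(n)$, and adding gives $NCF(n)=\bigl(1+(p-1)\bigr)RNCF(n)=p\,RNCF(n)$. Everything then reduces to the recursion.

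The two base cases $RNCF(1)=(p-1)^2$ and $RNCF(2)=4(p-1)^4$ I would obtain by direct enumeration from the explicit parametrization of Theorem~\ref{Parametrization}, listing the reducible functions in one and two variables and dividing out the only representation redundancy, namely Remark~\ref{RemComplement} (complementing the last input set while swapping the last two output values leaves the function unchanged).

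For $n\ge 3$ I would decompose each reducible function by its maximal leading block of zero outputs. By the factorization appearing in the proof of Lemma~\ref{Irreducibles}, a function is reducible exactly when its leading output value vanishes, so I write it via Formula~\ref{NCFformula} with $b_1=\dots=b_k=0$ and $b_{k+1}\neq 0$, giving
\[
f=\Bigl(\prod_{i=1}^{k}Q_{S_{\sigma(i)}}(x_{\sigma(i)})\Bigr)\,h,
\]
where $h$ is an \emph{irreducible} function (leading output $b_{k+1}\neq 0$) in the remaining $n-k$ variables. The $k$ leading variables all carry the common output value $0$, hence may be permuted freely; this makes their choice an unordered one, contributing $\binom{n}{k}$, while each factor $Q_{S_{\sigma(i)}}$ offers its $2(p-1)$ admissible one-sided input sets. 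For $1\le k\le n-2$ the remainder $h$ ranges over all $INCF(n-k)$ irreducible functions in $n-k\ge 2$ variables, and rewriting $INCF(n-k)=(p-1)RNCF(n-k)$ produces precisely the terms $\binom{n}{r-1}2^{r-1}(p-1)^{r}RNCF(n-r+1)$ after setting $r=k+1$ (this is why a \emph{reducible} count reappears on the right-hand side). The two remaining block sizes give the terminal term: $k=n$ forces $h$ to be a nonzero constant, so $f$ is a box $c\prod_{i=1}^n Q_{S_i}$, of which there are $2^{n}(p-1)^{n+1}$; and $k=n-1$ leaves a single-variable core, contributing $n\,2^{n-1}(p-1)^{n+1}(p-2)$, the two adding to $2^{n-1}(p-1)^{n+1}\bigl(2+n(p-2)\bigr)$.

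The step I expect to be the main obstacle is proving that this peeling is a bijection: that the maximal leading-zero-block size $k$, the unordered block together with its input sets, and the distinct irreducible remainder $h$, form a complete invariant of a reducible function with every such function arising exactly once. The genuinely delicate point is again Remark~\ref{RemComplement} acting on the last variable, and it is exactly what forces the factor $(p-2)$ rather than $(p-1)$ in the $k=n-1$ contribution: a single-variable core whose \emph{second} output is $0$ can, after complementing its input set, be absorbed into a longer string of $Q$'s, i.e.\ it is in fact a box already enumerated at $k=n$, and so must be excluded from the $k=n-1$ count. Verifying this non-overlap, together with the fact that for $k\le n-2$ a trailing zero of $h$ stays buried inside $h$ and never lengthens the leading block, is where the real work lies; once the bijection is established, collecting the factors $\binom{n}{k}$, $2^{k}$ and the powers of $(p-1)$ is routine.
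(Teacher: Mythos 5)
Your proposal is correct and takes essentially the same route as the paper's own proof: the identity $NCF(n)=p\,RNCF(n)$ via Lemma~\ref{Irreducibles}, the same base-case enumerations, and a decomposition of reducible functions by the leading block of zero output values, which is precisely the paper's classification into the classes $RNCF(n,n-r)$ (your block size $k$ is its $n-r-1$, your factor count $k+1$ its number of factors), with identical treatment of the Remark~\ref{RemComplement} redundancy (the $(p-2)$ factor and the halving of input-set choices on the final variable) and the same conversion $INCF=(p-1)RNCF$ to close the recursion. Even the step you flag as the remaining obstacle---that the peeling map is a bijection, i.e.\ that the representation of Formula~\ref{NCFformula} is unique up to the complement swap---is assumed implicitly rather than proved in the paper, so your sketch is not less complete than the published argument.
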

\begin{proof}
From Remark~\ref{RemDual}, in order to calculate $NCF(1)$
it is enough to calculate the number of nested canalyzing functions with
canalyzing output values $(0,b_2)$, with $b_2 \neq 0$,
and then multiply by $p$, because of the isomorphism between
the sets of vectors $\{(0,b_2)\}$ and $\{(b_1,b_2)\}$ with $b_2 \neq b_1$. This isomorphism is given by
\begin{displaymath}
(0,b_2) \xrightarrow{+(b_1,b_1)} (b_1, b_1 + b_2)
\end{displaymath}
for $b_1 = 1,\dots, p-1$.
 For output values of the form $(0,b_2)$, Formula~\ref{NCFformula} gives us
\begin{displaymath}
f(x_1) = b_2 Q_{S_1}(x_1)
\end{displaymath}
There are $p-1$ choices for $b_2$ and $p-1$ choices for $Q_{S_1}(x_1)$.
Note that we do not consider $2(p-1)$ choices for $Q_{S_1}(x_1)$ because a
nested canalyzing function with canalyzing input set $S_1$ and
canalyzing output values $(0,b_2)$ is also a nested canalyzing function
with canalyzing input set $S^c_1$ and canalyzing output values $(b_2,0)$
(see remark~\ref{RemComplement}). Therefore
\begin{displaymath}
RNCF(1) = (p-1)^2 ,
\end{displaymath}

Similarly, in order to calculate $NCF(2)$, it is enough to calculate the number of
nested canalyzing functions with canalyzing output values $(0,b_2,b_3)$, with $b_3 \neq b_2$,
and then multiply by $p$, because of the isomorphism between the
sets of vectors $\{(0,b_2,b_3)\}$ and $\{(b_1,b_2,b_3)\}$ given by
\begin{displaymath}
(0,b_2,b_3) \xrightarrow{+(b_1,b_1,b_1)} (b_1, b_1 + b_2,b_1 + b_3)
\end{displaymath}
for $b_1 = 1,\dots, p-1$.
For output values of the form $(0,0,b_3)$ formula~\ref{NCFformula} gives us
\begin{displaymath}
f(x_1) = b_3 Q_{S_1}(x_1) Q_{S_2}(x_2)
\end{displaymath}
There are $p-1$ choices for $b_3$ and $2(p-1)$ choices for each of the $Q_{S_i}(x_i)$ for $i = 1,2$.
Therefore, for $(0,0,b_3)$ there are $4(p-1)^3$ functions.

For $(0,b_2,b_3)$, where $b_2\neq0$, $b_3\neq0$, and $b_3\neq b_2$, Formula~\ref{NCFformula} give us
\begin{displaymath}
f(x_1) =  Q_{S_1}(x_1) \biggl((b_3-b_2)Q_{S_2}(x_2) + b_2 \biggr)
\end{displaymath}

There are $2(p-1)$ choices for $Q_{S_1}(x_1)$, $p-1$ choices for $b_2$,
$p-2$ choices for $b_3$, and $p-1$ choices for $Q_{S_2}(x_2)$.
Note that we do not consider $2(p-1)$ choices for $Q_{S_2}(x_2)$
because a nested canalyzing function with canalyzing input sets $S_1$, $S_2$
and canalyzing output values $(0,0,b_3)$ is also a nested canalyzing function
with canalyzing input sets $S_1$, $S^c_2$ and canalyzing output values $(0,b_3,0)$
(see Remark~\ref{RemComplement}).
Therefore, there are $2 (p-1)^3(p-2)$ functions.
If we count the number of functions after permuting the variables, we get
\begin{displaymath}
RNCF(2) = 4 (p-1)^3 + \binom{2}{1} 2(p-1)^3 (p-2),
\end{displaymath}
Simplifying the formula above we get
\begin{displaymath}
RNCF(2) = 4 (p-1)^4 .
\end{displaymath}

For $n\geqslant3$ let us compute the number of distinct nested canalyzing functions
that can be written as a product of $n$ nested canalyzing functions, $RNCF(n,n)$.
From Formula~\ref{NCFformula} it is clear that a nested canalyzing function
can be written as a product of $n$ nested canalyzing functions
if and only if the output values must have the form $(0,\dots,0,b_n,b_{n+1})$, where $b_{n+1}\neq b_n$.
First consider the case where $b_n=0$, i.e. the output values have the form
$(0,\dots,0,b_{n+1})$, with $b_{n+1}\neq 0$, for which we have:
\begin{displaymath}
f(x_1,\cdots,x_n) = b_{n+1} Q_{S_1}(x_1)\dots Q_{S_n}(x_n)
\end{displaymath}
There are $p-1$ choices for $b_{n+1}$ and $2(p-1)$ choices for each of the $Q_{S_i}(x_i)$ for $i = 1,\dots,n$.
Therefore, there are $2^n(p-1)^{n+1}$ functions. Note that if we permute the variables, we will still get the same functions.

For $(0,\dots,0,b_n,b_{n+1})$, where $b_{n+1}\neq 0$, $b_n\neq0$, and $b_{n+1}\neq b_n$,
Formula~\ref{NCFformula} gives us
\begin{displaymath}
\begin{array}{l}
f(x_1,\cdots,x_n) = \\
Q_{S_1}(x_1) \dots Q_{S_{n-1}}(x_{n-1}) \biggl\{ (b_{n+1} - b_{n})Q_{S_n}(x_n)\\
\hspace{2.5cm}+b_n \biggl\}
\end{array}
\end{displaymath}
There are $2(p-1)$ choices for each $Q_{S_i}(x_i)$ for $i = 1,\dots,n-1$,
$p-1$ choices for $b_n$, $p-2$ choices for $b_{n+1}$, and $p-1$ choices for $Q_{S_n}(x_n)$.
Note that we do not consider $2(p-1)$ choices for $Q_{S_n}(x_n)$
because a nested canalyzing function with canalyzing input sets $S_1, \dots,S_n$
and canalyzing output values $(0,\dots,0,b_n,b_{n+1})$ is also a nested canalyzing function
with canalyzing input set $S_1, \dots,S_{n-1},S^c_n$ and canalyzing output values $(0,\dots,0,b_{n+1},b_n)$
(see Remark~\ref{RemComplement}). Therefore, there are $2^{n-1} (p-1)^{n+1}(p-2)$ such functions.
If we count the number of functions after permuting the variables, we get
\begin{displaymath}
\begin{array}{l}
RNCF(n,n) = \\
2^n(p-1)^{n+1} + {\displaystyle\binom{n}{1} 2^{n-1} (p-1)^{n+1}(p-2)}\\ \\
=2^{n-1}(p-1)^{n+1}(2+n(p-2)).
\end{array}
\end{displaymath}

Let us now compute the number of distinct nested canalyzing functions that can be
written as a product of $n-r$ nested canalyzing functions, $RNCF(n,n-r)$ for $r=1,\dots,n-2$.
From Formula~\ref{NCFformula}, it is clear that a nested canalyzing function can be written
as a product of $n-1$ nested canalyzing functions if and only if the output values have the form
$(0,\dots,0,b_{n-r},\dots,b_n,b_{n+1})$, where $b_{n-r}\neq 0$ and $b_{n+1}\neq b_n$. In this
case we have:
\begin{displaymath}
\begin{array}{l}
f(x_1,\ldots,x_n) = \\
{\displaystyle\prod^{n-r-1}_{i=1}Q_{S_i}(x_i)} \\
{\displaystyle\sum^{r}_{j=0}\biggl\{(b_{n-j+1}-b_{n-j}){\displaystyle\prod^{n-j}_{i=n-r}
Q_{S_i}(x_i) + b_{n-r}}\biggr\}}
\end{array}
\end{displaymath}
There are $2(p-1)$ choices for each $Q_{S_i}(x_i)$ for $i = 1,\dots,n-r-1$
and $INCF(r+1)$ functions for the summation part.
Note that here we do have repetition coming from Remark~\ref{RemComplement}
because $b_{n-r}\neq0$. Therefore, there are $2^{n-r-1} (p-1)^{n-r-1} INCF(r+1)$ such functions.
If we count the number of functions after permuting the variables, we get
\begin{equation} \label{Product(n,n-r)}
\begin{array}{l}
RNCF(n,n-r) = \\\\
{\displaystyle\binom{n}{r+1} 2^{n-r-1}(p-1)^{n-r-1}INCF(r+1)}.
\end{array}
\end{equation}

Now since
\begin{displaymath}
\begin{array}{l}
RNCF(n) = \\
{\displaystyle\sum^n_{r=2} RNCF(n,r) = \sum^{n-2}_{r=0} RNCF(n,n-r)},
\end{array}
\end{displaymath}
we have
\begin{displaymath}
RNCF(n) = \sum^{n-2}_{r=1} RNCF(n,n-r) + RNCF(n,n).
\end{displaymath}
Now, replacing equation~\ref{Product(n,n-r)} in the previous formula, we have
\begin{displaymath}
\begin{array}{l}
RNCF(n) =  \\\\
{\displaystyle\sum^{n-2}_{r=1} \binom{n}{r+1} 2^{n-r-1} (p-1)^{n-r-1} INCF(r+1)} \\\\
\hspace{1cm}+ RNCF(n,n).
\end{array}
\end{displaymath}
If we make the change of variable $\mu = n-r$, then $\mu-1 = n-r-1$ and $r+1 = n-\mu+1$. Therefore,
\begin{displaymath}
\begin{array}{l}
RNCF(n) =  \\\\
{\displaystyle\sum^{n-1}_{\mu=2} \binom{n}{n-\mu+1} 2^{\mu-1} (p-1)^{\mu-1} INCF(n-\mu+1)}\\\\
\hspace{1cm}+ RNCF(n,n).
\end{array}
\end{displaymath}
But since,
\begin{displaymath}
\binom{n}{n-(\mu-1)} = \binom{n}{\mu-1},
\end{displaymath}
 we have
\begin{displaymath}
\begin{array}{l}
RNCF(n) =  \\\\
{\displaystyle\sum^{n-1}_{\mu=2} \binom{n}{\mu-1} 2^{\mu-1} (p-1)^{\mu-1} INCF(n-\mu+1)} \\\\
\hspace{1cm}+ RNCF(n,n),
\end{array}
\end{displaymath}
From Lemma~\ref{Irreducibles},
\begin{displaymath}
\begin{array}{l}
RNCF(n) =  \\\\
{\displaystyle\sum^{n-1}_{\mu=2} \binom{n}{\mu-1} 2^{\mu-1} (p-1)^\mu RNCF(n-\mu+1)} \\\\
\hspace{1cm}+ RNCF(n,n),
\end{array}
\end{displaymath}
where
\begin{equation}
\label{nbynreducibles}
RNCF(n,n) = 2^{n-1}(p-1)^{n+1}(2+n(p-2)).
\end{equation}
Finally, again using Lemma~\ref{Irreducibles}
\begin{displaymath}
NCF(n) = RNCF(n)+INCF(n) = pRCF(n)
\end{displaymath}
This completes the proof.
\end{proof}

\begin{ex} [Boolean case]
Jarrah et. al.~\cite{Jarrah} show that the class of Boolean nested canalyzing functions
is identical to the class of unate cascade functions. Sasao and Kinoshita~\cite{SasaoKinoshita}
found a recursive formula for the number of unate cascade functions. Therefore, the same formula
can be used to compute the number of Boolean nested canalyzing functions.
Below is the formula originally given by Sasao and Kinoshita~\cite{SasaoKinoshita} which is a particular case of our formula in Theorem~\ref{NumberNCF}, namely when $p=2$:
\begin{displaymath}
NCF(n)=2 E(n),
\end{displaymath}
where
\begin{displaymath}
E(1)=2,\ E(2)=4,
\end{displaymath}
and
\begin{displaymath}
E(n)=\sum^{n-1}_{r=2}\binom{n}{r-1}2^{r-1}E(n-r+1)+2^n.
\end{displaymath}
\end{ex}
Tables~\ref{p3} -~\ref{p5} show the number of nested canalyzing functions for $p=3,5$ and $n=1,\dots,8$.
\begin{table}
  \centering
  \caption{Number of nested canalyzing functions for $p=3$ and $n=1,\dots,8$}\label{p3}
  \begin{tabular}{| c | c |}
\hline
$n$   &NCF(n)   \\\hline
  1 & 12  \\\hline
  2 &192   \\\hline
  3 &5568   \\\hline
  4 &219648   \\\hline
  5 &10834944 \\\hline
  6 &641335296  \\\hline
  7 &44288360448   \\\hline
  8 &3495313145856   \\\hline
\end{tabular}
\end{table}

\begin{table}
  \centering
  \caption{Number of nested canalyzing functions for $p=5$ and $n=1,\dots,8$}\label{p5}
  \begin{tabular}{| c | c |}
\hline
$n$   &NCF(n)   \\\hline
  1 & 80  \\\hline
  2 &5120   \\\hline
  3 &547840   \\\hline
  4 &78561280 \\\hline
  5 &14082703360 \\\hline
  6 &3029304606720\\\hline
  7 &760232846295040 \\\hline
  8 &218043057365319680 \\\hline
\end{tabular}
\end{table}
\section{Asymptotic properties of $NCF(n)$}
In this section we examine the asymptotic properties of the formula given in Theorem \ref{NumberNCF}, i.e. we want to know the behavior of $NCF(n)$ as $n$ becomes large.

First we derive the following inequalities:
\begin{lem}\label{LemUpperBound1}
For fixed $n$ and $r=2,\dots,n-1$, we have
\begin{displaymath}
2^{r-1}(p-1)^rRNCF(n-r+1) \leq RNC(n)
\end{displaymath}
\end{lem}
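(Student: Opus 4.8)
The plan is to read off the desired inequality directly from the recursive formula for $RNCF(n)$ established in Theorem~\ref{NumberNCF}. For $n \geq 3$ that formula expresses $RNCF(n)$ as
\begin{displaymath}
\sum_{r=2}^{n-1} \binom{n}{r-1} 2^{r-1} (p-1)^r RNCF(n-r+1) + 2^{n-1}(p-1)^{n+1}(2+n(p-2)),
\end{displaymath}
which is a sum of $n-2$ indexed terms together with one trailing term. The quantity on the left-hand side of the lemma, $2^{r-1}(p-1)^r RNCF(n-r+1)$, is precisely the $r$-th summand with its binomial coefficient $\binom{n}{r-1}$ stripped off. So the entire content of the lemma is that a single (shrunken) summand cannot exceed the whole sum, and the right-hand side, written $RNC(n)$ in the statement, is to be read as $RNCF(n)$.

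First I would establish that every quantity appearing in the formula is nonnegative, and in fact that each $RNCF(m)$ is strictly positive. This follows by induction: the base cases give $RNCF(1) = (p-1)^2 > 0$ and $RNCF(2) = 4(p-1)^4 > 0$, and since $p$ is prime we have $p - 1 \geq 1$ and $2 + n(p-2) > 0$, so the recursion expresses $RNCF(n)$ as a sum of positive terms. With positivity in hand, for any fixed $r$ in the range $2, \dots, n-1$ the full sum dominates its $r$-th term together with that term's binomial coefficient:
\begin{displaymath}
RNCF(n) \geq \binom{n}{r-1} 2^{r-1} (p-1)^r RNCF(n-r+1).
\end{displaymath}

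The final step is to discard the binomial factor. Since $1 \leq r-1 \leq n-2$, we have $\binom{n}{r-1} \geq 1$, and therefore
\begin{displaymath}
\binom{n}{r-1} 2^{r-1} (p-1)^r RNCF(n-r+1) \geq 2^{r-1} (p-1)^r RNCF(n-r+1),
\end{displaymath}
which chains with the previous inequality to yield the claim. There is no genuine obstacle here; the only point requiring care is the nonnegativity bookkeeping, since dropping a summand is legitimate only once one knows every discarded term is nonnegative and the retained binomial coefficient is at least one. Both checks are immediate from the explicit base cases and the assumption that $p \geq 2$.
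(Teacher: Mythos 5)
Your proof is correct and follows essentially the same route as the paper, which simply observes that the inequality is a direct consequence of the recursive formula in Theorem~\ref{NumberNCF}; your version merely makes explicit the positivity of each $RNCF(m)$ (which is anyway clear since these are cardinalities of sets of functions) and the fact that $\binom{n}{r-1}\geq 1$.
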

\begin{proof}
It is a direct consequence of the formula given at Theorem \ref{NumberNCF}.
\end{proof}
\begin{lem}\label{LemUpperBound2}
For all natural numbers $n$, we have
\begin{displaymath}
RNCF(n,n) \leq 2^{2n} (p-1)^{n+2}.
\end{displaymath}\end{lem}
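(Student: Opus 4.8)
The plan is to start from the closed form for $RNCF(n,n)$ that is already extracted inside the proof of Theorem~\ref{NumberNCF}, namely equation~\ref{nbynreducibles}, which gives $RNCF(n,n) = 2^{n-1}(p-1)^{n+1}(2+n(p-2))$. Every quantity appearing here is a nonnegative integer (a count of functions, not an element of $\mathbb{F}$), so the claimed bound is an ordinary inequality between natural numbers, and I would prove it by comparing the two sides factor by factor rather than by any field-theoretic argument.

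First I would cancel the common strictly positive factor $2^{n-1}(p-1)^{n+1}$ from both sides of the target inequality $RNCF(n,n) \leq 2^{2n}(p-1)^{n+2}$. Since $p \geq 2$ makes this factor positive, the direction of the inequality is preserved, and the claim reduces to the single inequality $2 + n(p-2) \leq 2^{n+1}(p-1)$ for all $n \geq 1$.

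Next I would bound the left-hand side crudely. Because $p-2 \leq p-1$ and $2 \leq 2(p-1)$ whenever $p \geq 2$, we have $2 + n(p-2) \leq 2(p-1) + n(p-1) = (n+2)(p-1)$. It therefore suffices to show $(n+2)(p-1) \leq 2^{n+1}(p-1)$, i.e., after dividing by the positive integer $p-1$, the purely numerical inequality $n+2 \leq 2^{n+1}$. This last step is immediate by induction on $n$: the base case $n=1$ reads $3 \leq 4$, and if $n+2 \leq 2^{n+1}$ then $(n+1)+2 = (n+2)+1 \leq 2^{n+1}+1 \leq 2^{n+2}$. Chaining these inequalities back through the cancellation gives the stated bound.

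I do not expect any genuine obstacle here: the argument is elementary and the bound is quite loose, which is exactly what one wants for the asymptotic estimates that follow. The only points needing a little care are recording that every factor cancelled in the first step is strictly positive (so the inequality is preserved), and keeping the whole computation over the integers rather than reducing modulo $p$, since $RNCF(n,n)$ is a cardinality. I would also double-check the reduced inequality directly in the Boolean case $p=2$, where the left-hand side collapses to $2$ and the right-hand side is $2^{n+1}$, as a sanity check that the looseness of the bound is uniform in $p$.
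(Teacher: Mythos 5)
Your proposal is correct and follows essentially the same route as the paper: both start from the closed form $RNCF(n,n) = 2^{n-1}(p-1)^{n+1}\bigl(2+n(p-2)\bigr)$ of Equation~\ref{nbynreducibles} and reduce the claim to the crude estimate $2+n(p-2)\leq 2^{n+1}(p-1)$, which the paper gets via $n\leq 2^n$ and $p-2\leq p-1$, and you get via $2+n(p-2)\leq (n+2)(p-1)$ together with $n+2\leq 2^{n+1}$. The only difference is bookkeeping (you cancel the common factor first and induct on $n+2\leq 2^{n+1}$), not substance.
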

\begin{proof}
From equation~\ref{nbynreducibles},
\begin{displaymath}
\begin{array}{lcl}
RNCF(n,n) &=& 2^{n-1}(p-1)^{n+1}(2+n(p-2))\\
&\leqslant&2^{n-1}(p-1)^{n+1}(2+2^n(p-1))\\ 
&\leqslant&2^{n-1}(p-1)^{n+1}(2^{n+1}(p-1))\\
&\leqslant&2^{2n}(p-1)^{n+2}    
\end{array}
\end{displaymath}
\end{proof}
\begin{lem}\label{LemUpperBound3}
For all natural numbers $n\geqslant3$, we have
\begin{displaymath}
RNCF(n) \leq 2^{n(n-1)} (p-1)^{2n}.
\end{displaymath}
\end{lem}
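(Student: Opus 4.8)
The plan is to prove the bound by strong induction on $n$, using the recursion for $RNCF(n)$ from Theorem~\ref{NumberNCF} together with the estimate for $RNCF(n,n)$ from Lemma~\ref{LemUpperBound2}. The idea is to substitute the inductive hypothesis into every summand, collapse all the (unequal) powers of $p-1$ into the single power $(p-1)^{2n}$, and then reduce to a purely numerical inequality in $n$ that is controlled by showing the summands decay geometrically.

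First I would record the base values that seed the induction. Since $RNCF(1)=(p-1)^2=2^{0}(p-1)^{2}$ and $RNCF(2)=4(p-1)^4=2^{2}(p-1)^{4}$, the asserted inequality in fact holds with equality for $n=1,2$. The case $n=3$ I would verify directly from the recursion: one computes $RNCF(3)=24(p-1)^6+4(p-1)^4(3p-4)$, and the inequality $RNCF(3)\le 2^{6}(p-1)^6=64(p-1)^6$ reduces, after dividing by $4(p-1)^4>0$, to $10p^2-23p+14\ge 0$, which holds for every $p$ because its discriminant $23^2-4\cdot 10\cdot 14=-31$ is negative.

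For the inductive step ($n\ge 4$), assume the bound for all smaller arguments. In the recursion $RNCF(n)=\sum_{r=2}^{n-1}\binom{n}{r-1}2^{r-1}(p-1)^r RNCF(n-r+1)+RNCF(n,n)$, every index $n-r+1$ satisfies $2\le n-r+1\le n-1$, so I may substitute the inductive bound $RNCF(n-r+1)\le 2^{(n-r+1)(n-r)}(p-1)^{2(n-r+1)}$ and the estimate $RNCF(n,n)\le 2^{2n}(p-1)^{n+2}$ from Lemma~\ref{LemUpperBound2}. The exponent of $p-1$ in the $r$-th summand is $2n-r+2\le 2n$, and in the last term it is $n+2\le 2n$; since $p-1\ge 1$, I bound each such power by $(p-1)^{2n}$ and factor it out. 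Writing $T(r):=\binom{n}{r-1}2^{E(r)}$ with $E(r)=(n-r)^2+n-1$ for the resulting coefficient of $(p-1)^{2n}$, this leaves the purely numerical goal
\[
\sum_{r=2}^{n-1}T(r)+2^{2n}\le 2^{n(n-1)}.
\]
The key step is to show that $T(\cdot)$ is decreasing on $\{2,\dots,n-1\}$. Computing the ratio gives $T(r+1)/T(r)=\tfrac{n-r+1}{r}\,2^{-2n+2r+1}$; setting $k=n-r\ge 2$ this is at most $\tfrac12$ precisely when $\tfrac{k+1}{n-k}\le 4^{k-1}$, which follows from $n-k\ge 2$ together with $\tfrac{k+1}{2}\le 4^{k-1}$ for $k\ge 2$. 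Hence the terms decay at least geometrically, so $\sum_{r=2}^{n-1}T(r)\le 2T(2)=n\,2^{\,n^2-3n+4}$, and dividing the target by $2^{n(n-1)}$ reduces everything to the elementary inequality $n\,2^{-2n+4}+2^{\,3n-n^2}\le 1$, whose left side is decreasing in $n$ and already below $\tfrac12$ at $n=4$.

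The main obstacle is the case $n=3$: collapsing the lower powers of $p-1$ into $(p-1)^{2n}$ discards too much when $p=2$, and for small $n$ there is not yet enough slack for the geometric-sum estimate to absorb the $2^{2n}$ contribution coming from $RNCF(n,n)$. This forces $n=3$ to be settled by the exact identity above rather than folded into the inductive step, which only becomes comfortable once $n\ge 4$. The remaining care lies in establishing the monotonicity of $T(r)$, so that the dominant contribution is pinned at $r=2$ and the geometric bound $\sum T(r)\le 2T(2)$ is justified.
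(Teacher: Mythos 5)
Your proof is correct, but it takes a genuinely different route from the paper's. The paper argues by ordinary induction from $n$ to $n+1$: it splits off the $r=2$ term of the recursion, invokes Lemma~\ref{LemUpperBound1} to bound every remaining summand $2^{r-1}(p-1)^{r}RNCF(n-r+1)$ by $RNCF(n)$ (so only the single inductive hypothesis at $n$ is ever used), sums the binomial coefficients via $\sum_{r=1}^{n-1}\binom{n+1}{r}=2^{n+1}-n-3\leq 2^{n+1}$, and finishes with Lemma~\ref{LemUpperBound2}; the base case $n=3$ is checked directly, as you do. You instead run a strong induction: you substitute the bound at every smaller argument into every summand (which is why you correctly need, and record, that the inequality holds with equality at $n=1,2$ to seed the recursion's reach down to $RNCF(2)$), collapse all powers of $p-1$ into $(p-1)^{2n}$, and control the resulting coefficients $T(r)=\binom{n}{r-1}2^{(n-r)^2+n-1}$ by showing consecutive ratios are at most $\tfrac12$, pinning the sum at $2T(2)$. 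What your approach buys: it dispenses with Lemma~\ref{LemUpperBound1} entirely, and the terminal inequality $n\,2^{4-2n}+2^{3n-n^2}\leq 1$ makes the slack in the bound explicit. What the paper's approach buys: Lemma~\ref{LemUpperBound1} replaces your monotonicity and geometric-decay analysis wholesale, so the inductive step is shorter and needs only the ordinary, not the strong, inductive hypothesis. Both arguments are forced to treat $n=3$ as a separate base case because the estimates are too lossy there (your reduction to $10p^2-23p+14\geq 0$, with negative discriminant, is a clean way to settle it), and both rest on the same recursion from Theorem~\ref{NumberNCF} together with Lemma~\ref{LemUpperBound2}.
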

\begin{proof}
We prove this by induction over $n$. First note that for $n=3$,
\begin{displaymath}
\begin{array}{l}
RNCF(3) = 
 {\displaystyle\binom{3}{1} 2^1 (p-1)^2 RNCF(2)}\\ \\
\hspace{2cm}+ 2^{2} (p-1)^{4} (2+3p-6)\\\\
= {24(p-1)^{6} + 4(p-1)^{4}(3p-4)}\\\\
\leq {2^5 (p-1)^6 + 4(p-1)^4(3(p-1)-1)}\\\\
\leq {2^5 (p-1)^6 + 2^5(p-1)^6}\\\\
= {2^6 (p-1)^6} = 2^{3(3-1)}(p-1)^{2(3)}.
\end{array}
\end{displaymath}

Now, assume
\begin{displaymath}
RNCF(n) \leq 2^{n(n-1)} (p-1)^{2n}.
\end{displaymath}
Then
\begin{displaymath}
\begin{array}{l}
RNCF(n+1) =  \\ \\
{\displaystyle\sum^{n}_{r=2} \binom{n+1}{r-1} 2^{r-1} (p-1)^{r} RNCF(n-r+2)}\\ \\
\hspace{2cm}+ RNCF(n+1,n+1)\\ \\
= {\displaystyle\binom{n+1}{1}2(p-1)^2RNCF(n)}\\ \\
+{\displaystyle\sum^{n}_{r=3} \binom{n+1}{r-1} 2^{r-1} (p-1)^{r} RNCF(n-r+2)}\\ \\
\hspace{2cm}+ RNCF(n+1,n+1)
\end{array}
\end{displaymath}
From Lemma~\ref{LemUpperBound1} we have
\begin{displaymath}
\begin{array}{l}
{\displaystyle\sum^{n}_{r=3} \binom{n+1}{r-1} 2^{r-1} (p-1)^{r} RNCF(n-r+2)}\\ \\
={\displaystyle\sum^{n-1}_{r=2} \binom{n+1}{r} 2^{r} (p-1)^{r+1} RNCF(n-r+1)}\\ \\
\leqslant{\displaystyle\sum^{n-1}_{r=2} \binom{n+1}{r} 2 (p-1) RNCF(n)}
\end{array}
\end{displaymath}
Therefore
\begin{displaymath}
\begin{array}{l}
RNCF(n+1) =  \\ \\
\leqslant {\displaystyle\binom{n+1}{1}2(p-1)^2RNCF(n)}\\ \\
+{\displaystyle\sum^{n-1}_{r=2} \binom{n+1}{r} 2 (p-1) RNCF(n)}\\ \\
\hspace{2cm}+ RNCF(n+1,n+1)\\ \\
\leqslant {\displaystyle2(p-1)^2RNCF(n)\sum^{n-1}_{r=1} \binom{n+1}{r}}\\ \\
\hspace{2cm}+ RNCF(n+1,n+1)
\end{array}
\end{displaymath}
Using the inductive hypothesis and Lemma~\ref{LemUpperBound2} we have
\begin{displaymath}
\begin{array}{l}
RNCF(n+1) =  \\ \\
\leqslant {\displaystyle2(p-1)^22^{n(n-1)} (p-1)^{2n}\sum^{n-1}_{r=1} \binom{n+1}{r}}\\ \\
\hspace{2cm}+2^{2(n+1)} (p-1)^{n+3}\\ \\
={\displaystyle2^{n(n-1)+1} (p-1)^{2(n+1)}\sum^{n-1}_{r=1} \binom{n+1}{r}}\\ \\
\hspace{2cm}+2^{2(n+1)} (p-1)^{n+3}
\end{array}
\end{displaymath}
Now since
\begin{displaymath}
\sum^{n-1}_{r=1} \binom{n+1}{r} = 2^{n+1}-n-3
\end{displaymath}
we have
\begin{displaymath}
\begin{array}{l}
RNCF(n+1) \leqslant \\ \\
2^{n(n-1)+1} (p-1)^{2(n+1)}(2^{n+1})  \\ \\
\hspace{2cm}+2^{2(n+1)} (p-1)^{n+3}\\ \\
\leqslant2^{n^2+2} (p-1)^{2(n+1)} \\ \\
\hspace{2cm}+2^{n^2+n-1} (p-1)^{2(n+1)}\\ \\
 \leqslant 2^{(n+1)n}(p-1)^{2(n+1)}.
\end{array}
\end{displaymath}
Note that $n-1\geqslant2$. This completes our proof.

\end{proof}
Let us denote the number of all possible functions on $n$ variables by $\psi(n)$. The following theorem show that the set of all nested canalyzing functions is an increasingly smaller subset of the set of all functions.
\begin{thm} \label{LimitThm}
The ratio $NCF(n)/\psi(n)$ converges to 0 as n becomes large.
\end{thm}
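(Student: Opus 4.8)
The plan is to play the upper bound on $NCF(n)$ furnished by Lemma~\ref{LemUpperBound3} against the exact value of $\psi(n)$, and then simply compare growth rates. First I would compute $\psi(n)$ directly. A function $f:\mathbb{F}^n\to\mathbb{F}$ is completely determined by its table of values on the $p^n$ points of the domain $\mathbb{F}^n$, and each such value may be chosen freely among the $p$ elements of $\mathbb{F}$, so
\begin{displaymath}
\psi(n)=p^{p^n}.
\end{displaymath}

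Next, using $NCF(n)=p\,RNCF(n)$ from Theorem~\ref{NumberNCF} together with the bound $RNCF(n)\leq 2^{n(n-1)}(p-1)^{2n}$ from Lemma~\ref{LemUpperBound3}, valid for $n\geq 3$, I would obtain
\begin{displaymath}
NCF(n)\leq p\,2^{n(n-1)}(p-1)^{2n},
\end{displaymath}
and hence the sandwich
\begin{displaymath}
0\leq\frac{NCF(n)}{\psi(n)}\leq\frac{p\,2^{n(n-1)}(p-1)^{2n}}{p^{p^n}}.
\end{displaymath}

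The heart of the argument is then a comparison of growth rates, most cleanly carried out by taking logarithms. The logarithm of the numerator is
\begin{displaymath}
\log\bigl(p\,2^{n(n-1)}(p-1)^{2n}\bigr)=\log p+n(n-1)\log 2+2n\log(p-1),
\end{displaymath}
which is $O(n^2)$, whereas $\log\psi(n)=p^n\log p$ grows exponentially in $n$. Since an exponential function of $n$ dominates any polynomial in $n$, the difference of the two logarithms tends to $-\infty$; equivalently, the upper bound on $NCF(n)/\psi(n)$ tends to $0$. By the squeeze theorem the ratio itself converges to $0$, which is the assertion of the theorem.

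There is no serious obstacle once Lemma~\ref{LemUpperBound3} is in hand; the only point requiring care is the elementary but essential observation that $NCF(n)$ grows singly exponentially in $n^2$, while the total number of functions $\psi(n)=p^{p^n}$ grows doubly exponentially in $n$. The entire force of the theorem rests on this disparity, and exposing it is precisely what the polynomial-in-$n^2$ bound of Lemma~\ref{LemUpperBound3} was designed to do.
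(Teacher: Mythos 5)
Your proposal is correct and follows essentially the same route as the paper's proof: both combine $\psi(n)=p^{p^n}$, the identity $NCF(n)=p\,RNCF(n)$ from Theorem~\ref{NumberNCF}, and the bound of Lemma~\ref{LemUpperBound3} to reduce the claim to a quadratic-versus-exponential growth comparison. The only cosmetic difference is that you compare logarithms, while the paper rewrites the bound as $p^{n^2+n+1}/p^{p^n}$ and compares exponents directly.
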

\begin{proof}

\begin{displaymath}
\begin{array}{lcl}
\frac{NCF(n)}{\psi(n)} &=& \frac{NCF(n)}{p^{p^n}} = \frac{pRNCF(n)}{p^{p^n}} \\ \\
&\leq &\frac{p(2^{n(n-1)}(p-1)^{2n})}{p^{p^n}}\\ \\
&\leq &\frac{2^{n(n-1)}p^{2n+1}}{p^{p^n}}\\ \\
&\leq &\frac{p^{n(n-1)}p^{2n+1}}{p^{p^n}}\\ \\
& = &\frac{p^{n^{2}+n+1}}{p^{p^{n}}}\rightarrow 0\text{ as }n\rightarrow\infty
\end{array}
\end{displaymath}
because the exponential function $p^n$ grows much faster than the quadratic function $n^{2}+n+1$ as $n$ becomes large.
\end{proof}
\section{Discussion}
The concept of a nested canalyzing rule has been shown to be
a useful approach to elucidating design principles for molecular regulatory networks,
and such rules appear very frequently in published network models. But how
much of a restriction do such rules impose on the regulatory logic of the network, that is,
how ``special" are such rules? It was shown in \cite{Murrugarra} that networks with
nested canalyzing rules have very special dynamic properties. In this paper we have
shown that nested canalyzing rules do indeed make up a very small subset of all 
possible rules. In particular, we provide an explicit formula for the number of such
rules for a given number of variables. 

This was done by translating the problem into the mathematical context of polynomial functions
over finite fields and the language of algebraic geometry. The formula we provide uses 
a parametric description of the class of all nested canalyzing polynomials as an algebraic variety.
In particular, this provides a very easy way to generate such polynomials through particular
parameter choices, which is very useful, for instance, for large-scale simulation studies. 

Another interesting aspect of the formula derived in this paper is as a future discovery tool.
The formula for Boolean nested canalyzing functions in \cite{Jarrah} was obtained essentially
through serendipity. Once the parameterization of this class was obtained it was possible to
explicitly compute the number of solutions of the parametric equations for small numbers of variables.
The resulting integer sequence, giving the number of Boolean nested canalyzing rules for small
numbers of variables was matched to the number of Boolean unate cascade functions, for which
a formula is known. It was shown in \cite{Jarrah} that the two classes of functions are in fact
identical. This is of independent interest, since Boolean unate cascade functions have been shown
to lead to binary decision diagrams with smallest average path length, suggesting that they
are very efficient in processing information. 
The formula for multistate nested canalyzing rules provides a similar opportunity. While there is no
obvious match to other function classes for small fields, it is worth, in our opinion, to pursue this
discovery approach further for larger fields. 
\section{Appendix 1}
In this appendix we derive the polynomial form for the indicator functions $Q_S$. First, for each $r\in \mathbb{F}$ we denote the indicator function of the singleton set $\{r\}$ by $P_r$, i.e., for $x_0\in \mathbb{F}$,
\begin{displaymath}
P_r(x_0)=\biggl\{\begin{array}{ll}
1&\text{if}\ x_0=r \\
0&\text{if}\ x_0\neq r
\end{array}
\end{displaymath}
The polynomial form of $P_r$ is given in the following lemma.

\begin{lem}\label{Pr}
For $r\in \mathbb{F}$, we have
\begin{displaymath}
P_r(x)=(p-1){\displaystyle\prod_{\substack{a\in \mathbb{F}\\ a\neq r}}(x-a)},
\end{displaymath}
which has the expanded form
\begin{displaymath}
\begin{array}{l}
P_r(x)=\\
(p-1)\left[ x^{p-1}+rx^{p-2}+r^2x^{p-3}+\cdots+r^{p-2}x+{\displaystyle\prod_{\substack
{a\in \mathbb{F}\\ a\neq r}}a} \right]\\
\end{array}
\end{displaymath}
\end{lem}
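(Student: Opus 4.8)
The plan is to establish the two asserted equalities separately: first that the product $(p-1)\prod_{a\neq r}(x-a)$ reproduces the indicator function $P_r$ pointwise on $\mathbb{F}$, and then that this product expands into the displayed polynomial. Since $B_1\cong R$ and the product has degree $p-1<p$, it is already its own canonical representative in $R$, so verifying the values on all of $\mathbb{F}$ suffices to identify it with $P_r$.

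For the first equality I would argue by cases on the input $x_0\in\mathbb{F}$. If $x_0=b\neq r$, then $b$ itself is one of the factors in the product, so $(b-b)=0$ forces the product to vanish, giving the required value $0$. If $x_0=r$, then as $a$ ranges over $\mathbb{F}\setminus\{r\}$ the differences $r-a$ range over all nonzero elements of $\mathbb{F}$, so $\prod_{a\neq r}(r-a)=\prod_{c\in\mathbb{F}^{\ast}}c=(p-1)!$, which equals $-1$ in $\mathbb{F}_p$ by Wilson's theorem. Multiplying by the leading factor $(p-1)=-1$ yields $(-1)(-1)=1$, matching $P_r(r)=1$.

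For the expanded form I would start from the fundamental identity $\prod_{a\in\mathbb{F}}(x-a)=x^p-x$ over $\mathbb{F}_p$, so that $\prod_{a\neq r}(x-a)=(x^p-x)/(x-r)$. Using Fermat's little theorem in the form $r^p=r$, I rewrite $x^p-x=(x^p-r^p)-(x-r)$ and apply the factorization $x^p-r^p=(x-r)\sum_{k=0}^{p-1}r^k x^{p-1-k}$; dividing by $(x-r)$ gives $x^{p-1}+rx^{p-2}+\cdots+r^{p-2}x+(r^{p-1}-1)$, which already matches the bracketed polynomial except for its constant term. The remaining step is to confirm $r^{p-1}-1=\prod_{a\neq r}a$: when $r\neq 0$ both sides vanish (the left by Fermat, the right because the factor $a=0$ occurs in the product), while when $r=0$ both equal $(p-1)!=-1=p-1$ by Wilson. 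Multiplying through by $(p-1)$ then produces exactly the stated expansion. The computation is essentially routine; the only point requiring genuine care is this constant-term identification, which forces the $r=0$ versus $r\neq 0$ case split and leans on Wilson's theorem in precisely the same way as the evaluation at $x=r$, so I anticipate no real obstacle beyond keeping the three finite-field facts ($x^p-x=\prod_a(x-a)$, Fermat, Wilson) organized.
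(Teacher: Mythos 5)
Your proof is correct, and it splits into one half that mirrors the paper and one half that takes a genuinely different route. For the pointwise identity $g(x_0)=P_r(x_0)$ your argument is essentially the paper's: both note the product vanishes when $x_0\neq r$ and invoke Wilson's theorem at $x_0=r$; your version is a bit cleaner, since you observe directly that $a\mapsto r-a$ is a bijection from $\mathbb{F}\setminus\{r\}$ onto the nonzero elements, so the product is $(p-1)!=-1$ at once, whereas the paper reaches $(p-1)!$ by writing the value as $(p-1)\,r!\,(-1)^{p-1-r}(p-1-r)!$ and converting the signs $-i$ into $p-i$ modulo $p$. For the expanded form, however, your approach diverges. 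The paper expresses each coefficient of $\prod_{a\neq r}(x-a)$ as a signed symmetric sum over the roots and evaluates these sums iteratively using $\sum_{b\neq r}b=-r$, a term-by-term bookkeeping computation; you instead use the global factorization $\prod_{a\in\mathbb{F}}(x-a)=x^p-x$, divide by the factor $x-r$ via $x^p-x=(x^p-r^p)-(x-r)$ together with the geometric-sum factorization of $x^p-r^p$ (using Fermat's little theorem $r^p=r$), and then reconcile the constant term $r^{p-1}-1$ with $\prod_{a\neq r}a$ by the case split $r\neq 0$ (both sides zero: Fermat on the left, the factor $a=0$ on the right) versus $r=0$ (both sides $-1$ by Wilson). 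Your route produces all coefficients simultaneously from standard finite-field identities, and the only place requiring care is the constant term; this makes it easier to render airtight than the paper's symmetric-sum manipulation, which is the one spot in the published proof where the index bookkeeping is delicate. What the paper's route buys in exchange is self-containedness — it never needs the identity $\prod_{a\in\mathbb{F}}(x-a)=x^p-x$ — and a computational style (direct manipulation of coefficients $a^i_{j}$) that matches the coefficient calculations reused later in the proof of the parametrization theorem in Appendix 2. Both arguments are valid; yours is the more economical proof of this lemma in isolation.
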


\begin{proof}
Let $g(x) = (p-1){\displaystyle\prod_{\substack{a\in \mathbb{F}\\ a\neq r}}(x-a)}$.
We want to prove that $g(x) = P_r(x)$ for all $x\in\mathbb{F}$.
Clearly $g(x_0)=0$ if $x_0\neq r$. It remains to prove that $g(r)=1$.
From the definition of $g$, it can be expanded as
\begin{displaymath}
\begin{array}{lcl}
g(x) &=& (p-1)(x(x-1)\cdots(x-(r-1)) \\
&&\cdots(x-(r+1))\cdots(x-(p-1))).
\end{array}
\end{displaymath}
Then
\begin{displaymath}
\begin{array}{ lcl }
 g(r)&=&(p-1)r!(-1)^{p-1-r}(p-1-r)! \\
&=&(p-1)(-1)^{r}(-1)(-2)\cdots(-r) \\
&&(-1)^{p-1-r}(p-1-r)!\\
\end{array}
\end{displaymath}

Now, since  $p - i = -i $ (mod p) for $ i = 1 , \dots , r $, we get
\begin{displaymath}
\begin{array}{ lcl }
 g(r) & = & (p-1)(-1)^{p-1}(p-1)(p-2)\cdots  \\
 &&(p-r)(p-1-r)!\\
&=&(p-1)(p-1)!  \\
&=&(p-1)(p-1) \text{  (from Wilson's Theorem) } \\
&=&1
\end{array}
\end{displaymath}
This proves the first assertion. For the second claim, from the previous formula for $P_r$, we see that  $P_r$ is a polynomial of degree $p-1$, so $P_r$ can be written in the form
\begin{displaymath}
\begin{array}{lcl}
P_r(x)&=&(p-1)[x^{p-1}+a_{p-2}x^{p-2} \\
&&+a_{p-3}x^{p-3}+\dots+a_{1}x+a_0],\\
\end{array}
\end{displaymath}
where
 \begin{displaymath}
a_{p-j} = (-1)^j {\displaystyle\sum_{\substack{b_1,\dots,b_j\in \mathbb{F}\\ b_1,\dots,b_j\neq r}}b_1\dots b_j} ,
\end{displaymath}
for  $j\in\{2,\dots,p-1\}$. Then
\begin{displaymath}
\begin{array}{ l }
a_{p-j} = \\ \\
(-1)^j{\displaystyle\sum^{p-1}_{\substack{b_1=0\\b_1\neq r}}\cdots{\displaystyle\sum^{p-1}_{\substack{b_{j-1}=0\\b_{j-1}\neq r}}}\biggl[\sum_{\substack{b_j\in \mathbb{F}\\b_j\neq r}}b_1\dots b_{j-1}b_j}\biggr]
\end{array}
\end{displaymath}
\begin{displaymath}
= (-1)^j{\displaystyle\sum^{p-1}_{\substack{b_1=0\\b_1\neq r}}\cdots{\displaystyle\sum^{p-1}_{\substack{b_{j-1}=0\\b_{j-1}\neq r}}}\biggl[b_1\dots b_{j-1}\sum_{\substack{b_j\in \mathbb{F}\\b_j\neq r}}b_j}\biggr]
\end{displaymath}
\begin{displaymath}
=(-1)^j{\displaystyle\sum^{p-1}_{\substack{b_1=0\\b_1\neq r}}\cdots{\displaystyle\sum^{p-1}_{\substack{b_{j-1}=0\\b_{j-1}\neq r}}}\biggl[b_1\dots b_{j-1}(-r)}\biggr]
\end{displaymath}
\begin{displaymath}
=(-1)^j{\displaystyle\sum^{p-1}_{\substack{b_1=0\\b_1\neq r}}\cdots{\displaystyle\sum^{p-1}_{\substack{b_{j-1}=0\\b_{j-1}\neq r}}}\biggl[b_1\dots b_{j-2}(-r)\sum_{\substack{b_{j-1}\in \mathbb{F}\\b_{j-1}\neq r}}b_{j-1}}\biggr]
\end{displaymath}
\begin{displaymath}
=(-1)^j{\displaystyle\sum^{p-1}_{\substack{b_1=0\\b_1\neq r}}\cdots{\displaystyle\sum^{p-1}_{\substack{b_{j-1}=0\\b_{j-1}\neq r}}}\biggl[b_1\dots b_{j-2}(-r)(-r)}\biggr]
\end{displaymath}
\begin{displaymath}
\begin{array}{l}
=(-1)^j(-r)^j\\\\
=r^j.
\end{array}
\end{displaymath}
Finally,
\begin{displaymath}
a_{0}=(-1)^{p-1}{\displaystyle\prod_{\substack{a\in \mathbb{F}\\ a\neq r}}a}={\displaystyle\prod_{\substack{a\in \mathbb{F}\\ a\neq r}}a}.
\end{displaymath}

\begin{rem}\label{even-exponent}
Note that $p-1$ is even for $p>2$ and $1=-1$ for $p=2$.
\end{rem}
This completes the proof.
\end{proof}

Finally, the polynomial form of $Q_S$ is given in the following lemma.

\begin{lem}\label{Qr}
For $S\subset \mathbb{F}$, we have
\begin{displaymath}
Q_{S}(x)={\displaystyle\sum_{r\in \mathbb{F}\diagdown S}P_{r}(x)}
\end{displaymath}
\end{lem}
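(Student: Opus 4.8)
The plan is to prove the identity by reducing it to a pointwise check on $\mathbb{F}$. Both $Q_S$ and $\sum_{r\in\mathbb{F}\setminus S}P_r$ are functions $\mathbb{F}\rightarrow\mathbb{F}$ in the single variable $x$, each representable as an element of $R$ (each $P_r$ has degree $p-1$ by Lemma~\ref{Pr}, and a finite sum of such polynomials again reduces into $R$ modulo $x^p-x$). By the isomorphism between $B_n$ and $R$ established earlier, specialized to the single-variable case, two such representatives coincide in $R$ if and only if they agree as functions at every point $x_0\in\mathbb{F}$. Hence it suffices to evaluate both sides at an arbitrary $x_0\in\mathbb{F}$ and compare.

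The key observation is that $Q_S$ is, by definition, the indicator function of the complement $\mathbb{F}\setminus S$, while by Lemma~\ref{Pr} each $P_r$ is the indicator function of the singleton $\{r\}$. Since $\mathbb{F}\setminus S=\bigsqcup_{r\in\mathbb{F}\setminus S}\{r\}$ is a \emph{disjoint} union, for any fixed $x_0$ at most one summand $P_r(x_0)$ can be nonzero, namely the term with $r=x_0$, and this occurs exactly when $x_0\in\mathbb{F}\setminus S$.

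Concretely I would split into two cases. If $x_0\in S$, then $Q_S(x_0)=0$; on the right-hand side every index $r$ ranges over $\mathbb{F}\setminus S$, so $r\neq x_0$ and therefore $P_r(x_0)=0$ for each term, giving the sum $0$. If $x_0\notin S$, then $Q_S(x_0)=1$; now $x_0$ itself occurs as an index, contributing $P_{x_0}(x_0)=1$, while every other term with $r\neq x_0$ vanishes, so the sum equals $1$. In both cases the two sides agree, which completes the verification.

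There is no serious obstacle here: the entire content is the bookkeeping that the indicator of a disjoint union equals the sum of the indicators of its parts, a fact made precise by the defining property of $P_r$ from Lemma~\ref{Pr}. The only step that needs to be stated with care is the appeal to the $B_n\cong R$ identification, which is what guarantees that agreement of the two expressions as functions on $\mathbb{F}$ upgrades to equality of their polynomial representatives in $R$.
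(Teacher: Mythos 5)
Your proposal is correct and follows essentially the same argument as the paper: a two-case pointwise evaluation showing both sides equal $0$ when $x_0\in S$ and equal $1$ when $x_0\notin S$, using the indicator property of $P_r$ from Lemma~\ref{Pr}. Your extra remark that the $B_n\cong R$ identification upgrades pointwise agreement to equality in $R$ is a point the paper leaves implicit, but it is the same proof.
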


\begin{proof}
Clearly, if $x_0\in S$, then $P_r(x_0)=0$ for all $r\in \mathbb{F}\diagdown S$. Therefore
\begin{displaymath}
Q_{S}(x_0)={\displaystyle\sum_{r\in \mathbb{F}\diagdown S}P_{r}(x_0)}=0
\end{displaymath}
Similarly, if $x_0\notin S$, then $P_{x_0}(x_0)=1$ and $P_r(x_0)=0$ for all $r\neq x_0$ in $\mathbb{F}\diagdown S$. Therefore
\begin{displaymath}
Q_{S}(x_0)={\displaystyle\sum_{r\in \mathbb{F}_p\diagdown S}P_{r}(x_0)}=1.
\end{displaymath}

This completes the proof.

\end{proof}
\section{Appendix 2}
Here we give the proof of Theorem \ref{Parametrization}.
\begin{proof}

Let us first assume that the polynomial $f$ is a nested canalyzing function
with canalyzing input sets $S_1,\dots,S_n$ and canalyzing output values $b_1,\dots,b_{n+1}$.
Then, by Theorem~\ref{ThmNCF}, $f$ can be expanded as
\begin{displaymath}
\begin{array}{l}
f(x_1,\ldots,x_n)=\\
{\displaystyle\sum^{n-1}_{j=0}\biggl\{(b_{n-j+1}-b_{n-j}){\displaystyle\prod^{n-j}_{i=1}Q_{S_i}(x_i)}\biggr\}}+b_1
\end{array}
\end{displaymath}
\begin{displaymath}
={\displaystyle\sum^{n-1}_{j=0}\biggl\{B_{n-j}\sum_{\substack{(i_1,\dots,i_{n-j})\\i_t\in \mathbb{F}\\t=1,\dots,n-j}}a^1_{i_1}\dots a^{n-j}_{i_{n-j}}x^{i_1}_1x^{i_2}_2\dots x^{i_{n-j}}_{n-j}\biggr\}}
\end{displaymath}
\begin{displaymath}
\begin{array}{l}
+b_1 + {\displaystyle\sum^{n-1}_{j=0}\biggl\{(b_{n-j+1}-b_{n-j}) a^1_{0} \dots a^{n-j}_0 \biggr\}},
\end{array}
\end{displaymath}
where $Q_{S_i}$ is defined as in Lemma~\ref{Qr}, i.e.,
\begin{displaymath}
\begin{array}{l}
Q_{S_i}(x_i)={\displaystyle\sum_{r\in S^c_i}P_{r}(x_i)}\\\\
=a^i_{p-1}x^{p-1}_i+a^i_{p-2}x^{p-2}_i+\dots+a^i_0.
\end{array}
\end{displaymath}

Now, from Lemma~\ref{Pr}, we have that
\begin{displaymath}
\begin{array}{lcl}
C_{[p-1]}&=&(b_{n+1}-b_{n})a^1_{p-1}\cdots a^n_{p-1}\\\\
&=&(b_{n+1}-b_{n}){\displaystyle\prod^{n}_{i=1} (p-1)\mid S^c_i\mid }\ ,
\end{array}
\end{displaymath}
and we have
\begin{displaymath}
\begin{array}{l}
C_{i_1\dots i_{n-\mu}}=\\
{\displaystyle\sum^{\mu}_{j=0}\biggl\{B_{n-j}a^1_{i_1}\dots a^{n-\mu}_{i_{n-\mu}} a^{n-\mu+1}_{0}\dots a^{n-j}_{0}}\biggr\}
\end{array}
\end{displaymath}
\begin{displaymath}
=a^1_{i_1}\dots a^{n-\mu}_{i_{n-\mu}}{\displaystyle\sum^{\mu}_{j=0}\biggl\{B_{n-j}a^{n-\mu+1}_{0}\dots a^{n-j}_{0}}\biggr\}
\end{displaymath}
Now, if $j\neq n$, then
\begin{displaymath}
\begin{array}{l}
C^{i_j}_{[p-1]\diagdown\{j\}}=\\
(b_{n+1}-b_n)a^1_{p-1}\dots a^{j-1}_{p-1} a^{j}_{i_j} a^{j+1}_{p-1}\dots a^{n}_{p-1}
\end{array}
\end{displaymath}
\begin{displaymath}
=(b_{n+1}-b_n)a^j_{i_j}{\displaystyle\prod^n_{\substack{i=1\\i\neq j}}a^i_{p-1}}
\end{displaymath}
So
\begin{displaymath}
\begin{array}{l}
a^j_{i_j}=\biggl[(b_{n+1}-b_n){\displaystyle\prod^n_{\substack{i=1\\i\neq j}}a^i_{p-1}}\biggr]^{-1}C^{i_j}_{[p-1]\diagdown\{j\}}
\end{array}
\end{displaymath}
for $j=1,\dots,n-1$,
and
\begin{displaymath}
\begin{array}{l}
C^{p-1,\dots,p-1}_{[0]\diagdown\{1,\dots,n-\mu \}}=\\\\
{\displaystyle\sum^{\mu}_{j=0}\biggl\{B_{n-j}a^1_{p-1}\dots a^{n-\mu}_{p-1} a^{n-\mu+1}_{0}\dots a^{n-j}_{0}}\biggr\}
\end{array}
\end{displaymath}
\begin{displaymath}
=a^1_{p-1}\dots a^{n-\mu}_{p-1}{\displaystyle\sum^{\mu}_{j=0}\biggl\{B_{n-j}a^{n-\mu+1}_{0}\dots a^{n-j}_{0}}\biggr\}
\end{displaymath}

\begin{displaymath}
=\biggl[{\displaystyle\prod^{n-\mu}_{i=1}a^i_{p-1}}\biggr]{\displaystyle\sum^{\mu}_{j=0}\biggl\{B_{n-j}a^{n-\mu+1}_{0}\dots a^{n-j}_{0}}\biggr\}
\end{displaymath}
So
\begin{displaymath}
\begin{array}{l}
\biggl[{\displaystyle\sum^{\mu}_{j=0}\biggl\{B_{n-j}a^{n-\mu+1}_{0}\dots a^{n-j}_{0}}\biggr]=\\\\
\biggl[{\displaystyle\prod^{n-\mu}_{i=1}a^i_{p-1}}\biggr]^{-1}C^{p-1,\dots,p-1}_{[0]\diagdown\{1,\dots,n-\mu \}}.
\end{array}
\end{displaymath}
Now,
\begin{displaymath}
\begin{array}{l}
C_{i_1\dots i_{n-\mu}}=\\
{\displaystyle\prod^{n-\mu}_{j=1}a^j_{i_j}}\biggl[{\displaystyle\sum^{\mu}_{j=0}\biggl\{B_{n-j}a^{n-\mu+1}_{0}\dots a^{n-j}_{0}}\biggr]
\end{array}
\end{displaymath}
\begin{displaymath}
\begin{array}{l}
={\displaystyle\prod^{n-\mu}_{j=1}\biggl\{\biggl[(b_{n+1}-b_n){\displaystyle\prod^n_{\substack{i=1\\i\neq j}}a^i_{p-1}}\biggr]^{-1}}\\
C^{i_j}_{[p-1]\diagdown\{j\}}\biggr\}\biggl[{\displaystyle\prod^{n-\mu}_{j=1}a^j_{p-1}}\biggr]^{-1}C^{p-1,\dots,p-1}_{[0]\diagdown\{1,\dots,n-\mu\}}
\end{array}
\end{displaymath}
\begin{displaymath}
\begin{array}{l}
={\displaystyle\prod^{n-\mu}_{j=1}\biggl[(b_{n+1}-b_n){\displaystyle\prod^n_{i=1}a^i_{p-1}}\biggr]^{-1}}\\
C^{p-1,\dots,p-1}_{[0]\diagdown\{1,\dots,n-\mu \}}\ {\displaystyle\prod^{n-\mu}_{j=1}C^{i_j}_{[p-1]\diagdown\{j\}}}
\end{array}
\end{displaymath}
\begin{displaymath}
={\displaystyle\prod^{n-\mu}_{j=1}\biggl[C_{[p-1]}\biggr]^{-1}}C^{p-1,\dots,p-1}_{[0]\diagdown\{1,\dots,n-\mu \}}\ {\displaystyle\prod^{n-\mu}_{j=1}C^{i_j}_{[p-1]\diagdown\{j\}}}
\end{displaymath}
\begin{displaymath}
=C^{p-1,\dots,p-1}_{[0]\diagdown\{1,\dots,n-\mu \}}\ {\displaystyle\prod^{n-\mu}_{j=1} C^{-1}_{[p-1]} C^{i_j}_{[p-1]\diagdown\{j\}}}
\end{displaymath}


Conversely, suppose Equations~\ref{Parametrization1} - \ref{Parametrization6} hold for the coefficients of the polynomial $f(x_1,\dots,x_n)$ in Equation~\ref{Eq1}.
We need to show that $f(x_1,\dots,x_n)$ is a nested canalyzing function.
Let $a^j_{p-1}=(p-1)\mid S^c_j\mid$,\\
$a^j_{i_j}=(p-1){\displaystyle\sum_{\substack{r\in S^c_j}}r^{p-1-i_j}}$,
and
$a^j_0=Q_{S_j}(0)$ for $j=1,\dots,n$.
Then
 \begin{displaymath}
C_{[p-1]} = (b_{n+1}-b_{n})\prod^n_{i=1}a^i_{p-1},
\end{displaymath}
and
\begin{equation}\label{prodwithinproof}
C^{-1}_{[p-1]}C^{i_j}_{[p-1]\diagdown\{j\}} = a^j_{i_j} [a^j_{[p-1]}]^{-1}
\end{equation}
as well as
\begin{displaymath}
\begin{array}{l}
C^{p-1,\dots,p-1}_{[0]\diagdown\{1,\dots,n-\mu \}} = \\\\
\biggr[{\displaystyle\prod^{n-\mu}_{i=1}a^i_{p-1}}\biggr]{\displaystyle\sum^{\mu}_{j=0}\biggl\{B_{n-j}a^{n-\mu+1}_{0}\dots a^{n-j}_{0}}\biggr\}
\end{array}
\end{displaymath}

Now from Equation~\ref{Parametrization1},
\begin{displaymath}
C_{i_1\dots i_{n-\mu}}=
C^{p-1,\dots,p-1}_{[0]\diagdown\{1,\dots,n-\mu \}}{\displaystyle\prod^{n-\mu}_{j=1}C^{-1}_{[p-1]}C^{i_j}_{[p-1]\diagdown\{j\}}}
\end{displaymath}
 and from Equation~\ref{prodwithinproof} we get
\begin{displaymath}
C_{i_1\dots i_{n-\mu}} = C^{p-1,\dots,p-1}_{[0]\diagdown\{1,\dots,n-\mu \}}\prod^{n-\mu}_{j=1} a^j_{i_j} [a^j_{[p-1]}]^{-1}.
\end{displaymath}
Then
\begin{displaymath}
\begin{array}{l}
C_{i_1\dots i_{n-\mu}} = \\\\
\biggr[{\displaystyle\prod^{n-\mu}_{i=1}a^i_{i_j}}\biggr]{\displaystyle\sum^{\mu}_{j=0}\biggl\{B_{n-j}a^{n-\mu+1}_{0}\dots a^{n-j}_{0}}\biggr\}
\end{array}
\end{displaymath}
Now, from Equation~\ref{Parametrization6} we get
\begin{displaymath}
C_{[0]} - b_1 = C^{p-1}_{[0]\diagdown\{1\}}C^{0}_{[p-1]\diagdown\{1\}}  C^{-1}_{[p-1]}.
\end{displaymath}
Then
\begin{displaymath}
C_{[0]} = b_1 + {\displaystyle\sum^{n-1}_{j=0}\biggl\{B_{n-j} a^1_{0} \dots a^{n-j}_0 \biggr\}}
\end{displaymath}
\begin{displaymath}
\begin{array}{l}
f(x_1,\cdots,x_n) = \\\\
{\displaystyle\sum^{n-1}_{\mu=0}\biggl\{ {\displaystyle\sum_{\substack{(i_1,\dots,i_{n-\mu})\\i_t\in \mathbb{F}_p\\t=1,\dots,n-\mu}}C_{i_1\dots i_{n-\mu}} x^{i_1}_1\dots x^{i_{n-j}}_{n-j}}\biggr\}}\\\\
\hspace{1cm}+C_{[0]}.
\end{array}
\end{displaymath}
Therefore
\begin{displaymath}
\begin{array}{l}
f(x_1,\cdots,x_n) = \\ \\
{\displaystyle\sum^{n-1}_{j=0}\biggl\{B_{n-j}\sum_{\substack{(i_1,\dots,i_{n-j})\\i_t\in \mathbb{F}\\t=1,\dots,n-j}}a^1_{i_1}\dots a^{n-j}_{i_{n-j}}x^{i_1}_1x^{i_2}_2\dots x^{i_{n-j}}_{n-j}\biggr\}}
\end{array}
\end{displaymath}
\begin{displaymath}
+b_1 + {\displaystyle\sum^{n-1}_{j=0}\biggl\{(b_{n-j+1}-b_{n-j}) a^1_{0} \dots a^{n-j}_0 \biggr\}}.
\end{displaymath}
Finally,
\begin{displaymath}
f(x_1,\dots,x_n)=
{\displaystyle\sum^{n-1}_{j=0}\biggl\{B_{n-j}{\displaystyle\prod^{n-j}_{i=1}Q_{S_i}(x_i)}\biggr\}}+b_1,
\end{displaymath}
which is nested canalyzing by Theorem~\ref{ThmNCF}. This completes the proof.
\end{proof}


\end{document}